\numberwithin{equation}{section}
\newcommand{\Lc}{{\mathcal L}}
\newcommand{\Lto}{{\stackrel{\Lc}{\longrightarrow}}}
\newcommand{\Leq}{{\,\stackrel{\Lc}{=}\,}}
\newcommand{\disteq}{\Leq}
\newcommand{\E}{\mathbb{E}}
\newcommand{\Prob}{\mathbb{P}}
\newcommand{\R}{\mathbb{R}}
\newcommand{\truth}{{\bf 1}}
\newcommand{\quickselect}{{\tt QuickSelect}\xspace}
\newcommand{\Li}{\text{Li}}
\newcommand{\quickmin}{{\tt QuickMin}\xspace}
\newcommand{\quicksort}{{\tt QuickSort}\xspace}
\newcommand{\quickval}{{\tt QuickVal}\xspace}
\newcommand{\quickquant}{{\tt QuickQuant}\xspace}
\newtheorem{theorem}{Theorem}[section]
\newtheorem{lemma}[theorem]{Lemma}
\newtheorem{proposition}[theorem]{Proposition}
\newtheorem{corollary}[theorem]{Corollary}
\newtheorem{Remark}[theorem]{Remark}
\newtheorem{Definition}[theorem]{Definition}
\providecommand{\norm}[1] {\left\lVert #1 \right\rVert}
\providecommand{\abs}[1] {\left\lvert #1 \right\rvert}
\providecommand{\eqref}[1] {(\ref{#1})}
\providecommand{\refL}[1] {\text{Lemma~}\ref{#1}}
\providecommand{\refT}[1] {\text{Theorem~}\ref{#1}}
\providecommand{\refP}[1] {\text{Proposition~}\ref{#1}}
\providecommand{\refC}[1] {\text{Corollary~}\ref{#1}}
\providecommand{\refS}[1] {\text{Section~}\ref{#1}}
\providecommand{\refR}[1] {\text{Remark~}\ref{#1}}
\newcommand{\Sns}{S_{n,\theta}}
\newcommand{\ts} {{\tau_\theta}}
\newcommand{\LOneTo}{\overset{L^1}{\longrightarrow}}
\newcommand{\ignore}[1]{}
\begin{document}

\title[Convergence of \quickval Residual]
{Convergence of the \quickval Residual} 

\newcommand\urladdrx[1]{{\urladdr{\def~{{\tiny$\sim$}}#1}}}
\author{James Allen Fill}
\address{Department of Applied Mathematics and Statistics,
The Johns Hopkins University,
34th and %\hfill\break
Charles Streets,
Baltimore, MD 21218-2682 USA}
\email{jimfill@jhu.edu}
\urladdrx{http://www.ams.jhu.edu/~fill/}
\thanks{Research for both authors was supported by the Acheson~J.~Duncan Fund for the Advancement of Research in Statistics.  The funder had no role in study design, decision to publish, or preparation of the manuscript.  The research of the second author was conducted while he was affiliated with The Johns Hopkins University.}

\author{Jason Matterer}
\address{Systems \& Technology Research LLC,
600 West Cummings Park, 
Woburn, MA 01801 USA}
\email{jason.matterer@stresearch.com}

\date{May~28, 2025}

\keywords{QuickSelect; Find; QuickQuant; QuickVal residual; natural coupling; symbol comparisons; key comparisons; probabilistic source; tameness}

\subjclass[2020]{Primary:\ 60F05; secondary:\ 68W40}

\maketitle

\begin{center}
{\sc Abstract}
\vspace{.3cm}
\end{center}

\begin{small}
\quickselect (also known as {\tt Find}), introduced by Hoare \cite{h1961}, is 
a randomized algorithm for selecting a specified order statistic from an input sequence of~$n$ objects, or rather their identifying labels usually known as \emph{keys}.  The keys can be numeric or symbol strings, or indeed any labels drawn from a given linearly ordered set.  We discuss various ways in which the cost of comparing two keys can be measured, and we can measure the efficiency of the algorithm by the total cost of such comparisons.

We define and discuss a closely related algorithm known as \quickval and a natural probabilistic model for the input to this algorithm; \quickval searches (almost surely unsuccessfully) for a specified \emph{population} quantile 
$\alpha \in [0, 1]$ in an input sample of size~$n$.  Call the total cost of comparisons for this algorithm $S_n$.  We discuss a natural way to define the random variables $S_1, S_2, \ldots$ on a common probability space.  For a general class of cost functions, Fill and Nakama~\cite{fn2013} proved under mild assumptions that the scaled cost 
$S_n / n$
of \quickval converges in $L^p$ and almost surely to a limit random variable $S$.  For a general cost function, we consider what we term the \quickval residual:
\begin{equation*}
  \rho_n := \frac{S_n}n - S.
\end{equation*}
The residual is of natural interest, especially in light of the previous analogous work on the sorting algorithm 
{\tt QuickSort} \cite{bf2012,n2013,f2015,gk2016,s2017}.
In the case $\alpha = 0$ of {\tt QuickMin} with unit cost per key-comparison, we are able to 
calculate---\`{a} la Bindjeme and Fill for {\tt QuickSort}~\cite{bf2012}---the exact (and asymptotic) $L^2$-norm of the residual.  
We take the result as motivation for the scaling factor $\sqrt{n}$ for the {\tt QuickVal} residual for \emph{general} population quantiles and for \emph{general} cost.  We then prove \emph{in general} (under mild conditions on the cost function) that 
$\sqrt{n}\,\rho_n$
converges in law to a scale-mixture of centered Gaussians, 
and we also prove convergence of moments.  
\end{small}

%%%%%%%%%%%%%%%%%%%%%%%%%%%%%%%%%%%%%%%%%%%%%%%%%%%%%%%%%%%%%%%%%%%%%%%

\section{Introduction}\label{S:intro}

Parts of Sections \ref{S:intro}--\ref{S:set-up} are repeated nearly verbatim, for the convenience of the reader and with permission of the publisher, from \cite{fm2014}.  Note, however, that we have updated the literature review, perhaps most notably including an excellent sequel to this paper, namely, \cite{in2024}; see especially \refR{R:in2024}. 

In this section, we describe \quickselect and \quickquant and give historical background.  The main result of this paper, \refT{t:quickval_residual_limit}, concerns an algorithm very closely related to \quickquant known as \quickval, which is described in \refS{s:quickval_prelim}.  In \refS{c:exact_L2} we will define and consider the algorithm \quickmin, which can be viewed a special case of either \quickquant or \quickval. 

\quickselect (also 
known as {\tt FIND}), introduced by Hoare \cite{h1961}, is a randomized algorithm (a close cousin of the randomized sorting algorithm \quicksort, also introduced by Hoare \cite{h1962}) for selecting a specified order statistic from an input sequence of objects, or rather their identifying labels usually known as \emph{keys}.  The keys can be numeric or symbol strings, or indeed any labels drawn from a given linearly ordered set.  Suppose we are given keys $y_1,\ldots, y_n$ and we want to find the $m$th smallest among them. 
The algorithm first selects a key (called the pivot) uniformly at random. It then compares every other key to the pivot, thereby determining the rank, call it $r$, of the pivot among the~$n$ keys.  If $r = m$, then the algorithm terminates, returning the pivot key as output.  If $r > m$, then the algorithm is applied recursively to the keys smaller than the pivot to find the $m$th smallest among those; while if $r < m$, then the algorithm is applied recursively to the keys larger than the pivot to find the 
$(m - r)$th smallest among those.  More formal descriptions of \quickselect can be found in \cite{h1961} and \cite{k1972}, for example.

The cost of running \quickselect can be measured 
(somewhat crudely) 
by assessing the cost of comparing keys.  We assume that every comparison of two (distinct) keys costs some amount that is perhaps dependent on the values of the keys, and then the cost of the algorithm is the sum of the comparison costs.   

Historically, it was
customary to assign unit cost to each comparison of two keys, irrespective of their values.  We denote the (random) key-comparisons-count cost for {\tt QuickSelect} by $K_{n, m}$.
There have been many studies of the random variables $K_{n, m}$, including \cite{d1984}, \cite{mms1995}, \cite{gr1996}, \cite{MR1454110}, \cite{g1998}, \cite{d2001}, \cite{ht2002}, \cite{df2010}, \cite{fh2010}, and
\cite{fh2023}.
But unit cost is not always a reasonable model for comparing two keys.  For example, if each key is a string of symbols, then a more realistic model for the cost of comparing two keys is the value of the first index at which the two symbol strings differ.  To date, only a few papers (\cite{vcff2009}, \cite{fn2009}, \cite{fn2013}, and \cite{in2024}) have 
considered \quickselect from this more realistic symbol-comparisons perspective.  As in~\cite{fn2013} and~\cite{fm2014}, in this paper we will treat a rather general class of cost functions that includes both key-comparisons cost and symbol-comparisons cost.

In our set-up (to be described in detail in \refS{S:set-up}) for this paper, we will consider a variety of probabilistic models (called \emph{probabilistic sources}) for how a key is generated as an infinite-length string of symbols, but we will always assume that the keys form an infinite sequence of independent and identically distributed and almost surely distinct symbol strings.  This gives us, on a single probability space, all the randomness needed to run {\tt QuickSelect} for \emph{every} value of~$n$ and \emph{every} value of $m \in \{1, \dots, n\}$ by always choosing the \emph{first} key in the sequence as the pivot (and maintaining initial relative order of keys when the algorithm is applied recursively); this is what is meant by the \emph{natural coupling} 
(cf.~\cite[Section~1]{Fil2013}) of the runs of the algorithm for varying~$n$ and~$m$ (and varying cost functions).

When considering asymptotics of the cost of \quickselect as the number of keys tends to $\infty$, it becomes necessary to let the order statistic $m_n$ depend on the number of keys~$n$.  When 
$m_n / n \to \alpha \in [0, 1]$, we refer to \quickselect for finding the $m_n$th order statistic among~$n$ keys as {\tt QuickQuant}$(n, \alpha)$.  As explained in~\cite[Section~1]{Fil2013}, the natural coupling allows us to consider stronger forms of convergence for the cost of {\tt QuickQuant}$(n, \alpha)$ than convergence in distribution, such as almost sure convergence and convergence in $L^p$.
Fill and Nakama~\cite{fn2013} prove, under certain ``tameness'' conditions (to be reviewed later) on the probabilistic source and the cost function, that, for each fixed~$\alpha$, the cost of {\tt QuickQuant}$(n, \alpha)$, when scaled by~$n$, 
converges both in $L^p$ and almost surely to a limiting random variable.  Fill and Matterer~\cite{fm2014} extend these univariate convergence results to results about convergence of certain related stochastic processes.

Closely related to {\tt QuickQuant}$(n, \alpha)$ is an algorithm called {\tt QuickVal}$(n, \alpha)$, detailed in \refS{s:quickval_prelim}.  Employing the natural coupling, {\tt QuickVal}$(n, \alpha)$ searches (almost surely unsuccessfully) for a specified \emph{population} quantile $\alpha \in [0, 1]$ in an input sample of size~$n$.  Call the total cost of comparisons for this algorithm $S_n$.  For a general class of cost functions, Fill and Nakama~\cite{fn2013} proved under mild assumptions that the scaled cost $S_n / n$
of \quickval converges in $L^p$ and almost surely to a limit random variable $S$.  For a general cost function, we consider what we term the \quickval residual:
\begin{equation}
\label{rhon}
\rho_n := \frac{S_n}{n} - S.
\end{equation}
The residual is of natural interest, especially in light of the previous analogous work on 
{\tt QuickSort} \cite{bf2012,n2013,f2015,gk2016,s2017}.

An outline for this paper is as follows.  First, in \refS{S:set-up} we carefully describe our set-up and, in some detail, discuss probabilistic sources, cost functions, and tameness; we also discuss the idea of \emph{seeds}, which allow us a unified treatment of all sources. 
\refS{c:exact_L2} concerns {\tt QuickMin} (the case $\alpha = 0$ of \quickval) with unit cost per key-comparison, for which we are able to calculate---\`{a} la Bindjeme and Fill for {\tt QuickSort} \cite{bf2012}---the exact (and asymptotic) $L^2$-norm of the residual; the result is \refT{t:exact_L2}, which we take as motivation for the scaling factor $\sqrt{n}$ for the {\tt QuickVal} residual for \emph{general} population quantiles and for \emph{general} cost.  The remainder of the paper is devoted to establishing convergence of the \quickval\ residual.  \refS{s:quickval_prelim} introduces notation needed to state the main theorem and establishes an important preliminary result (\refL{l:sigma_limit}).  We state and prove the main theorem (\refT{t:quickval_residual_limit}), which asserts that the scaled cost of the \quickval\ residual converges in law to a scale mixture of centered Gaussians, in \refS{s:quickval_clt}; and in \refS{s:quickval_moments} we prove the corresponding convergence of moments.

\begin{Remark}
\emph{
As recalled from~\cite{fn2013} at the end of our \refS{S:sources}, many common sources, including memoryless and Markov sources, have the property that the source-specific cost function~$\beta$ corresponding to the symbol-comparisons cost for comparing keys is $\epsilon$-tame for every $\epsilon > 0$.  Thus our main result, \refT{t:quickval_residual_limit}, applies to all such sources.
}
\end{Remark}

\begin{Remark}
\label{R:in2024}
\emph{
In very recent work, Ischebeck and Neininger~\cite{in2024} extend our main \refT{t:quickval_residual_limit} from univariate normal convergence for each~$\alpha$ to Gaussian-process convergence, treating~$\alpha$ as a parameter, in the metric space of c\`{a}dl\`{a}g functions endowed with the Skorokhod metric.
}
\end{Remark}

To motivate the reader, here is a fairly easily understood instance of our main \refT{t:quickval_residual_limit}.  Suppose that keys arrive as i.i.d.\ uniform$(0, 1)$ random variables, and suppose that cost is measured classically as the number of key comparisons.  Using \quickval to search for population quantile $\alpha \in [0, 1]$, suppose for each $k \geq 0$ that the search has been narrowed to the (random) interval  $(L_k, R_k)$ after~$k$ steps of the algorithm have been carried out; in particular, $L_0 = 0$ and $R_0 = 1$.  Let $I_k := R_k - L_k$.  Then, as shown in the proof of \refT{t:quickval_residual_limit}, the random series (of positive terms) in the expressions
\begin{align*}
\sigma_{\infty}^2
&= \sum_{k = 1}^{\infty} (I_{k - 1} - I_{k - 1}^2) + 2 \sum_{\ell = 1}^{\infty} \sum_{k = 1}^{\ell - 1} (I_{\ell - 1} - I_{k - 1} I_{\ell - 1}) 
\\  
&= \sum_{k = 1}^{\infty} I_k (1 -I_k) + 2 \sum_{\ell = 2}^{\infty} I_{\ell} \sum_{k = 1}^{\ell - 1} (1 - I_k)
\end{align*}   
converge with probability one, and the residual $\rho_n$ given by~\eqref{rhon} converges in distribution to $\sigma_{\infty} Z$, where~$Z$ has a standard normal distribution and is independent of $\sigma_\infty$.

%%%%%%%%%%%%%%%%%%%%%%%%%%%%%%%%%%%%%%%%%%%%%%%%%%%%%%%%%%%%%%%%%%%%%%%
\section{Set-up}\label{S:set-up}

%%%%%%%%%%%%%%%%%%%%%%%%%%%%%%%%%%%%%%%%%%%%%%%%%%%%%%%%%%%%%%%%%%%%%%%
\subsection{Probabilistic sources}\label{S:sources}

Let 
us define the fundamental probabilistic structure underlying the analysis of {\tt QuickSelect}. We assume that keys arrive independently and with the same distribution and that each key is composed of a sequence of symbols from some
finite or countably infinite alphabet. Let $\Sigma$ be this alphabet (which we assume is totally ordered by $\leq$). Then a key is an element of $\Sigma^\infty$ [ordered by the lexicographic order, call it $\preceq$, corresponding to $(\Sigma,\leq)$] and a \emph{probabilistic source} is a stochastic process $W=(W_1, W_2, W_3, \ldots)$ such that for each $i$ the random variable $W_i$ takes values in $\Sigma$.  
We will impose restrictions on the distribution of $W$ that will have as a consequence that (with probability one) all keys are distinct.

We denote the cost (assumed to be nonnegative) of comparing two keys $w,w^\prime$ by ${\rm cost}(w, w^\prime)$.  As two examples, the choice 
${\rm cost}(w, w') \equiv 1$ gives rise to a key-comparisons analysis, whereas if words are symbol strings then a symbol-comparisons analysis is obtained by letting 
${\rm cost}(w, w')$ be the first index at which~$w$ and $w'$ disagree.

Since $\Sigma^\infty$ is totally ordered, a probabilistic source $W$ is governed by a distribution function $F$ defined for $w \in \Sigma^\infty$ by
$$
F(w) := \Prob (W \preceq w).
$$
Then the corresponding inverse probability transform~$M$, defined by 
 $$
 M(u):= \inf \left\{ w \in \Sigma^\infty : u \leq F(w) \right\},
 $$
has the property that if $U \sim \text{uniform}(0, 1)$, then $M(U)$ has the same distribution as $W$.  We refer to such uniform random variables~$U$ as \emph{seeds}.

Using this technique we can define a \emph{source-specific cost function} 
$$
\beta :(0,1) \times (0, 1) \rightarrow [0, \infty)
$$ 
by $\beta(u,v) := {\rm cost}(M(u), M(v))$.

\begin{Definition}
\label{d:tameness}
\emph{
Let $0< c < \infty$ and $0<\epsilon<\infty$.  A source-specific cost function~$\beta$ is said to be 
\emph{$(c,\epsilon)$-tame} if for $0 < u < t < 1$ we have
$$
\beta(u,t) \leq c\,(t-u)^{-\epsilon},
$$
and is said to be \emph{$\epsilon$-tame} if it is $(c, \epsilon)$-tame for some~$c$.
}
\end{Definition}

For further important background on sources, cost functions, and tameness, we refer the reader to Section~2.1 (see especially Definitions 2.3--2.4 and Remark~2.5) in Fill and Nakama~\cite{fn2013}.  Note in particular that many common sources, including memoryless and Markov sources, have the property that the source-specific cost function~$\beta$ corresponding to symbol-comparisons cost for comparing keys is $\epsilon$-tame for every $\epsilon > 0$.

%%%%%%%%%%%%%%%%%%%%%%%%%%%%%%%%%%%%%%%%%%%%%%%%%%%%%%%%%%%%%%%%%%%%%%%
\subsection{Tree of seeds and the \quickselect tree processes}\label{S:qsel_processes}

Let $\mathcal{T}$ be 
the collection of (finite or infinite) rooted ordered binary trees (whenever we refer to a binary tree we will assume it is of this variety) and let $\overline{T}\in\mathcal{T}$ be the complete infinite binary tree. We will label each node~$\theta$ in 
a given tree $T \in \mathcal{T}$
by a binary sequence representing the path from the root to $\theta$, where~$0$ corresponds to taking the left child and~$1$ to taking the right. We consider the set of  real-valued stochastic processes each with index set equal to some $T \in \mathcal{T}$.  For such a process, we extend the index set to $\overline{T}$ by defining $X_{\theta} = 0$ for $\theta \in \overline{T} \setminus T$.  
We will have need for the following definition of levels of a binary tree.

\begin{Definition}
\emph{
For $0\leq k <\infty$, we define the \emph{$k^{\rm th}$ level} $\Lambda_k$ of a binary tree as the collection of vertices that are at distance~$k$ from the root.
}
\end{Definition}

Let $\Theta = \bigcup_{0 \le k < \infty}\{0,1\}^k$ be the set of all finite-length binary strings, where $\{0,1\}^0 = \{\varepsilon\}$ with~$\varepsilon$ denoting the empty string. Set $L_\varepsilon := 0$, $R_\varepsilon := 1$, and $\tau_{\varepsilon} := 1$. Then, for $\theta \in \Theta$, we define $|\theta|$ to be the length of the string $\theta$, and $\upsilon_\theta(n)$ to be the size (through the arrival of the $n^{\text{th}}$ key) of the subtree rooted at node $\theta$. Given a sequence of independent and identically distributed (iid) seeds $U_1,U_2,U_3,\ldots$, we recursively define
\begin{eqnarray*}
\tau_\theta &:=& \inf\{i:\, L_\theta < U_i < R_\theta\}, \\
L_{\theta 0} &:=& L_\theta,\ L_{\theta 1} := U_{\tau_\theta}, \\
R_{\theta 0} &:=& U_{\tau_\theta},\ R_{\theta 1} := R_\theta,
\end{eqnarray*}
where $\theta_1 \theta_2$ denotes the concatenation of $\theta_1, \theta_2 \in \Theta$. 
For a source-specific cost function $\beta$ and $0\leq p <\infty$ we define 
\begin{align*}
S_{n,\theta} & := \sum_{\tau_\theta < i \leq n}\! \truth(L_\theta < U_i < R_\theta) \beta(U_i, U_{\tau_\theta}), \\
I_p(x,a,b) & := \int_{a}^{b}\! \beta^p(u,x)\,du, \\
I_{p,\theta} & := I_p(U_{\tau_\theta}, L_\theta, R_\theta), \\
I_\theta &:= I_{1,\theta}, \\
C_\theta &:= (\tau_\theta, U_{\tau_\theta}, L_\theta, R_\theta).
\end{align*}
In some later definitions we will make use of the positive part function defined as usual by 
$x^+ := x \truth(x > 0)$.
Given a source-specific cost function $\beta$ and the seeds $U_1,U_2,U_3,\ldots$, we define the $n$th 
{\tt QuickSelect} seed process 
as the $n$-nodes binary tree indexed stochastic process obtained by successive insertions of $U_1,\ldots, U_n$ into an initially empty binary search tree.

Before we use these random variables, we supply some understanding of them for the reader. 
The arrival time 
$\tau_\theta$ is the index of the seed that is slotted into node $\theta$ in the construction of the {\tt QuickSelect} seed process. Note that for each $\theta\in\Theta$ we have $P(\tau_\theta < \infty) = 1$. The interval $(L_\theta, R_\theta)$ provides sharp bounds for all seeds arriving after time $\tau_\theta$ that interact with $U_{\tau_\theta}$ in the sense of being placed in the subtree rooted at $U_{\tau_{\theta}}$. 
A crucial observation is that, conditioned on $C_\theta$, the sequence of seeds $U_{\tau_\theta+1}, U_{\tau_\theta + 2}, \ldots$ are iid uniform$(0,1)$; thus, again conditioned on $C_\theta$, the sum 
$S_{n,\theta}$ is the sum of $(n-\tau_\theta)^+$ iid random variables.  Note that when $n \leq \ts$ the sum defining $S_{n,\theta}$ is empty and so 
$S_{n,\theta}=0$; in this case we shall conveniently interpret $\Sns / (n-\ts)^+ =  0 / 0$ as $0$.
The random variable $S_{n, \theta}$ is the total cost of comparing the key with seed $U_{\tau_{\theta}}$ with keys (among the first~$n$
overall 
to arrive) whose seeds fall in the interval $(L_{\theta}, R_{\theta})$, and 
$I_{p, \theta}$ is the conditional $p$th moment of 
the cost of
one such comparison: 
If we let $U \sim\text{uniform}(0,1)$ independent of $C_\theta$, then
$$
I_{p,\theta} = \E \left[\left.\truth(L_\theta < U < R_\theta) 
\beta^p(U, U_{\tau_\theta})\right|C_\theta\right].
$$
Conditioned on $C_\theta$, the term $S_{n,\theta}$ is the sum of $(n-\tau_\theta)^+$ iid random variables with $p$th moment $I_{p,\theta}$.

We define the \emph{$n^{\rm th}$ {\tt QuickSelect} tree process} as the binary-tree-indexed stochastic process
$S_n = (\Sns)_{\theta\in\Theta}$ and the \emph{limit {\tt QuickSelect} tree process} 
(so called in light of \cite[Master Theorem~4.1]{fm2014})
by $I = (I_\theta)_{\theta\in\Theta}$.

We recall from~\cite{fm2014} an easily established lemma that will be invoked in~\refR{r:tameness} and in the proof of~\refL{l:QRMW}.
\begin{lemma}[Lemma~3.1 of~\cite{fm2014}]
\label{Ink}
If $\beta$ is $(c,\epsilon)$-tame with $0\leq\epsilon<1/s$, then for each fixed node $\theta\in \Lambda_k$ and $0\leq r < \infty$ we have
$$
\E I_{s,\theta}^r \leq \left(\frac{2^{s\epsilon}c^s}{1-s\epsilon}\right)^r\left(\frac{1}{r+1-rs\epsilon}\right)^k.~\qed
$$
\end{lemma}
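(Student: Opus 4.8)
The plan is to bound $I_{s,\theta}$ deterministically by a constant times $(R_\theta-L_\theta)^{1-s\epsilon}$, raise this to the $r$th power, and then reduce the lemma to a single moment computation for the interval length $R_\theta-L_\theta$, which for a node at level~$k$ is distributed as a product of $k$ independent uniform$(0,1)$ variables. First I would use tameness (and symmetry of the cost): since $L_\theta<U_{\tau_\theta}<R_\theta$ we have $\beta(u,U_{\tau_\theta})\le c\,|U_{\tau_\theta}-u|^{-\epsilon}$ for $u\in(L_\theta,R_\theta)\setminus\{U_{\tau_\theta}\}$, hence $\beta^s(u,U_{\tau_\theta})\le c^s|U_{\tau_\theta}-u|^{-s\epsilon}$, a singularity that is integrable because $s\epsilon<1$. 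Integrating,
\[
  I_{s,\theta}\le c^s\!\int_{L_\theta}^{R_\theta}\!|U_{\tau_\theta}-u|^{-s\epsilon}\,du
  =\frac{c^s}{1-s\epsilon}\Bigl[(U_{\tau_\theta}-L_\theta)^{1-s\epsilon}+(R_\theta-U_{\tau_\theta})^{1-s\epsilon}\Bigr],
\]
and since $t\mapsto t^{1-s\epsilon}$ is concave on $[0,\infty)$ (here $0\le 1-s\epsilon\le 1$), the bracketed sum is at most $2^{s\epsilon}(R_\theta-L_\theta)^{1-s\epsilon}$. Therefore
\[
  I_{s,\theta}^{\,r}\le\left(\frac{2^{s\epsilon}c^s}{1-s\epsilon}\right)^{\!r}(R_\theta-L_\theta)^{r(1-s\epsilon)}.
\]

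The next step is to identify the law of $R_\theta-L_\theta$ for $\theta\in\Lambda_k$. Writing the path from the root as $\varepsilon=\theta^{(0)},\theta^{(1)},\dots,\theta^{(k)}=\theta$ and $V_j:=R_{\theta^{(j)}}-L_{\theta^{(j)}}$ (so $V_0=1$), the recursions defining $L_\cdot$ and $R_\cdot$ give $V_{j+1}=V_j\,B_j$, where $B_j$ is either $W_j:=(U_{\tau_{\theta^{(j)}}}-L_{\theta^{(j)}})/V_j$ or $1-W_j$, according to the $(j{+}1)$st bit of $\theta$. I would then invoke the conditional-iid property recalled in the text: conditioned on $C_{\theta^{(j)}}$, the seeds arriving after time $\tau_{\theta^{(j)}}$ are iid uniform$(0,1)$ and independent of the history up to that time, so $U_{\tau_{\theta^{(j+1)}}}$ — the first such seed to fall in the ($C_{\theta^{(j)}}$-measurable) interval $(L_{\theta^{(j+1)}},R_{\theta^{(j+1)}})$ — is uniform on that interval; hence $W_{j+1}$ is uniform$(0,1)$ and independent of $(C_{\theta^{(0)}},\dots,C_{\theta^{(j)}})$, and in particular of $W_1,\dots,W_j$ (each $W_i$ being a function of $C_{\theta^{(i)}}$). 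An induction on $j$ then gives that $W_1,\dots,W_k$, and therefore $B_1,\dots,B_k$, are iid uniform$(0,1)$, so $R_\theta-L_\theta$ has the distribution of $\prod_{j=1}^k B_j$.

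To finish, I would use that a uniform$(0,1)$ variable $B$ satisfies $\E B^{\rho}=(\rho+1)^{-1}$ for every $\rho\ge 0$; applying this with $\rho=r(1-s\epsilon)\ge 0$ (finite precisely because $\epsilon<1/s$) together with independence gives $\E(R_\theta-L_\theta)^{r(1-s\epsilon)}=(r+1-rs\epsilon)^{-k}$, and taking expectations in the displayed bound for $I_{s,\theta}^{\,r}$ yields exactly the assertion. The one step needing genuine care — the main obstacle — is the rigorous form of the induction in the second paragraph, i.e.\ verifying that the successive split fractions along the root-to-$\theta$ path really are independent and uniform; this is where the conditional-iid structure of the seed sequence, together with the fact that each $\tau_{\theta^{(j)}}$ is a stopping time for the seeds (so that optional stopping applies), must be used properly. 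Everything else is elementary: the singular integral, the concavity inequality, and the uniform moment, with the hypotheses $s\epsilon<1$ and $r(1-s\epsilon)\ge 0$ entering exactly to keep the integral and the moment finite.
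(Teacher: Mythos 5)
Your proof is correct: the deterministic tameness bound $I_{s,\theta}\le\frac{2^{s\epsilon}c^s}{1-s\epsilon}(R_\theta-L_\theta)^{1-s\epsilon}$ combined with the observation that for a fixed node $\theta\in\Lambda_k$ the interval length $R_\theta-L_\theta$ is distributed as a product of $k$ iid uniform$(0,1)$ variables (via the conditional-iid property of the post-$\tau$ seeds) gives exactly the stated inequality. The paper itself gives no proof, citing Lemma~3.1 of \cite{fm2014}, and your argument is essentially the same as the one used there, with the one delicate point (uniformity and independence of the successive split fractions along the root-to-$\theta$ path) correctly identified and handled.
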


%%%%%%%%%%%%%%%%%%%%%%%%%%%%%%%%%%%%%%%%%%%%%%%%%%%%%%%%%%%%%%%%%%%%%%%

\section{Exact $L^2$ Asymptotics for \quickmin Residual}\label{c:exact_L2}

Before deriving a limit law for \quickval under general source-specific cost functions $\beta$, we motivate the scaling factor of 
$\sqrt{n}$ in \refT{t:quickval_residual_limit}. 
We consider the case of \quickmin (i.e.,\ \quickselect for the minimum key) with key-comparisons cost ($\beta \equiv 1$). 
Note that the operation of \quickmin and \quickval with $\alpha=0$ are identical. 
Our goal In this section is to establish \refT{t:exact_L2}, which gives exact and asymptotic expansions for the second moment of the residual in this special case.

Let $K_n$ denote the key-comparisons cost of \quickmin and define 
\begin{equation}\label{eq:Y_ndef}
Y_n := \frac{K_n - \mu_n}{n+1},
\end{equation}
where $\mu_n :=\ \E K_n = 2(n-H_n)$ for each $n$ \cite{k1972}. 
A consequence of \cite[Theorem~1]{ht2002} is that $K_n/n \Lto D$, where $D\disteq \sum_{k=0}^{\infty} \prod_{j=0}^k U_j$ has a Dickman distribution \cite{ht2002}, with $\mu := \E D = 2$ (here $U_0 := 1$).
(Note that \cite{ht2002} refers to $D-1$ as having a Dickman distribution; we ignore this distinction.)
Applying \cite[Theorems~3.1 and~3.2]{fn2013} to 
the special case of \quickmin using key-comparisons costs yields the stronger result that $Y_n$ converges to a limit random variable~$Y$ in $L^p$ for any $p\geq 1$ and almost surely.  We can then set
\begin{equation}\label{eq:Y_D1}
  D := Y + \mu = Y + 2,
\end{equation}
and this~$D$ has a Dickman distribution as defined above.

The main result of this \refS{c:exact_L2} is the exact calculation of of the second moment of $Y_n -Y$:

\begin{theorem}\label{t:exact_L2}
For $Y_n$ and $Y$ defined previously, we have
\begin{align*}
a_n^2 &:= \E(Y_n - Y)^2 =  (n + 1)^{-2} \left[ \frac{3}{2}n + 4H_n  - 4 H_n^{(2)} + \frac{1}{2} \right]\\
&= \frac{3}{2} n^{-1} + O\left(\frac{\log n}{n^2}\right).
\end{align*}
\end{theorem}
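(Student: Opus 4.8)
The plan is to exploit the distributional recursion satisfied by \quickmin. Write $K_n = (n-1) + K_{n-1}'$ where, conditionally on the pivot (the first seed among $n$) having rank $r$, the problem reduces: if $r=1$ the algorithm stops after $n-1$ comparisons, and otherwise it recurses on the $n-r$ keys larger than the pivot... actually the cleanest recursion for \quickmin is the following: let the pivot's rank be uniform on $\{1,\dots,n\}$; if the rank is $1$ we are done at cost $n-1$, otherwise we recurse on the $n-1$ keys other than... no: we recurse only on the keys \emph{smaller} than the pivot, which number $r-1$. So $K_n \disteq (n-1) + K_{U_n-1}$ where $U_n$ is uniform on $\{1,\dots,n\}$ and $K_0 = 0$. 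Under the natural coupling, the same seed $U_1$ drives all $K_n$ simultaneously: the first seed $U_1$ has a conditional rank among the first $n$ seeds, and the subtree of seeds landing in $(0,U_1)$ is exactly what \quickmin recurses on.

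First I would set up the coupled recursion at the level of the residual. From \eqref{eq:Y_ndef} and \eqref{eq:Y_D1}, $Y_n - Y = \tfrac{K_n-\mu_n}{n+1} - (D-2)$. Writing $N_n := \upsilon_0(n)$ for the number of the first $n$ seeds that fall in $(0,U_1)$ (so $N_n$ is the size of the left subtree at the root), the natural coupling gives $K_n = (n-1) + K_{N_n}^{(0)}$, where $K_m^{(0)}$ is the \quickmin cost of the $m$ seeds in the left subtree, itself a \quickmin instance driven by i.i.d.\ uniforms; and the limit satisfies $D = U_1 D^{(0)}$ for the corresponding sub-object $D^{(0)}$ (since $D \disteq U_1 \cdot (\text{independent copy of } D)$ is the Dickman fixed-point identity). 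Subtracting $\mu_n = 2(n-H_n)$ and rescaling, one obtains an exact identity of the form
\[
  Y_n - Y = \frac{N_n+1}{n+1}\,(Y_{N_n}^{(0)} - Y^{(0)}) + \Delta_n,
\]
where $\Delta_n$ collects the purely deterministic/combinatorial discrepancies: the difference between $\mu_n$ and $(n-1) + \E[\mu_{N_n}\mid U_1]$, the difference between $\frac{1}{n+1}$ and $\frac{1}{N_n+1}\cdot\frac{N_n+1}{n+1}$ bookkeeping, and the gap between $\frac{N_n}{n}$ and $U_1$ in the limit term $D = U_1 D^{(0)}$. The key point, à la Bindjeme--Fill, is that conditionally on $U_1$ (equivalently on $N_n$), the factor $\frac{N_n+1}{n+1}(Y_{N_n}^{(0)} - Y^{(0)})$ has the same conditional law as $\frac{N_n+1}{n+1}(Y_{N_n} - Y)$ with $N_n$ frozen, and $\Delta_n$ is \emph{conditionally centered} given $U_1$ — this is what makes the cross term vanish.

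Next I would take conditional second moments. Squaring and using $\E[\Delta_n \cdot (Y_{N_n}^{(0)}-Y^{(0)}) \mid U_1] = 0$ (the crucial orthogonality, which must be checked: it follows because $Y_m^{(0)} - Y^{(0)}$ has conditional mean zero given the left-subtree seed sequence, while $\Delta_n$ is measurable with respect to $U_1$ alone together with a further independent centered piece), we get
\[
  a_n^2 = \E\!\left[\Big(\tfrac{N_n+1}{n+1}\Big)^{\!2} a_{N_n}^2\right] + \E[\Delta_n^2].
\]
Since $N_n$ given the pivot rank is deterministic and the pivot rank is uniform on $\{0,1,\dots,n-1\}$ (the left subtree has $k$ seeds iff the root seed has rank $k+1$), $N_n \sim \mathrm{uniform}\{0,\dots,n-1\}$, so
\[
  a_n^2 = \frac{1}{(n+1)^2}\sum_{k=0}^{n-1}\frac{(k+1)^2}{n}\,a_k^2 + b_n,
\]
with $b_n := \E[\Delta_n^2]$ an explicit rational expression in $n$ and harmonic numbers. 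Multiplying through by $n(n+1)^2$ and differencing consecutive indices ($n$ vs.\ $n-1$) collapses the sum, yielding a first-order linear recursion for $c_n := (n+1)^2 a_n^2$ of the form $c_n = \frac{n-1}{n+1}c_{n-1} + (\text{explicit})$, or after the standard summation-factor trick an exact closed form $c_n = \tfrac32 n + 4H_n - 4H_n^{(2)} + \tfrac12$. The asymptotic line is then immediate from $H_n = \log n + O(1)$ and $H_n^{(2)} = O(1)$.

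The main obstacle I anticipate is \textbf{pinning down $\Delta_n$ and proving the conditional orthogonality rigorously}: one must write $D = U_1 D^{(0)}$ on the \emph{same} probability space as the $K_n$'s (legitimate because $Y^{(0)}$ is the a.s.\ limit of $Y^{(0)}_m$ along the left-subtree seeds, by the convergence theorem quoted from \cite{fn2013}), identify the exact deterministic correction terms so that $\Delta_n$ is genuinely uncorrelated with $Y^{(0)}_{N_n}-Y^{(0)}$ given $U_1$, and then compute $\E[\Delta_n^2]$ in closed form — this last computation is where the $4H_n$ and $-4H_n^{(2)}$ terms are born and is the most error-prone part. Once the recursion $a_n^2 = \frac{1}{n(n+1)^2}\sum_{k<n}(k+1)^2 a_k^2 + b_n$ is in hand with the correct $b_n$, the remainder is routine difference-equation manipulation.
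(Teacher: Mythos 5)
Your skeleton is essentially the paper's: decompose the residual via the left-subtree recursion $K_n = n-1+\widetilde K_{I_n}$ together with the Dickman fixed point, isolate the term $\frac{I_n+1}{n+1}\bigl(Y_{n,0}-Y^{(0)}\bigr)$, argue that the cross term with the remainder vanishes so that $a_n^2=\E\bigl[\bigl(\tfrac{I_n+1}{n+1}\bigr)^2 a_{I_n}^2\bigr]+b_n$, and then solve the divide-and-conquer recurrence. However, as written the proposal does not prove the theorem, for two concrete reasons. First, all of the theorem's content lies in the exact value of $b_n=\E\Delta_n^2$, which you never compute and explicitly defer as "the most error-prone part." In the paper this is the bulk of the work: one needs that, conditionally on $I_n$, the pivot seed $U=U_1$ is $\mathrm{Beta}(I_n+1,\,n-I_n)$ (so $\E[U\mid I_n]=\frac{I_n+1}{n+1}$, giving $\E\bigl(\tfrac{I_n+1}{n+1}-U\bigr)^2=\frac{1}{6(n+1)}$), that $\E\,{Y^{(0)}}^2=\Var(D)=\tfrac12$, the evaluation of $\E H_{I_n}$ and $\E H_{I_n}^2$ via the summation-reversal identity, and the initial value $a_0^2=\tfrac12$; these are exactly where the $\tfrac32 n+4H_n-4H_n^{(2)}+\tfrac12$ constants come from, and none of these ingredients appears in your outline.

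Second, your justification of the crucial orthogonality is incorrect as stated, and your $\Delta_n$ is mischaracterized. With the correct fixed point $D=1+U\widetilde D$ (not $D\disteq U\widetilde D$, which has the wrong mean), the remainder is $\Delta_n=\bigl(\tfrac{I_n+1}{n+1}-U\bigr)Y^{(0)}+W_2$ with $W_2$ a function of $(I_n,U)$; it is neither "purely deterministic/combinatorial" nor "measurable with respect to $U_1$ alone together with a further independent centered piece," since $Y^{(0)}$ is a function of the left-subtree seeds and is certainly not independent of $Y_{I_n,0}-Y^{(0)}$. Likewise $Y_{m,0}-Y^{(0)}$ is \emph{determined} by the left-subtree seed sequence, not conditionally centered given it; it is conditionally centered given $(I_n,U)$. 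The cross terms actually vanish for two different reasons, both requiring the conditioning the paper uses: terms multiplying $W_2$ (a function of $(I_n,U)$) die because $Y_{n,0}-Y^{(0)}$ and $Y^{(0)}$ have zero conditional mean given $(I_n,U)$, while the cross term between $\tfrac{I_n+1}{n+1}(Y_{n,0}-Y^{(0)})$ and $\bigl(\tfrac{I_n+1}{n+1}-U\bigr)Y^{(0)}$ dies only because of the Beta conditional mean of $U$ given $I_n$ — a distributional fact your argument never invokes. So the route is right, but the orthogonality needs the correct conditioning argument and the closed-form computation of $b_n$ must actually be carried out before the stated formula is established.
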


The remainder of this section builds to the proof of~\refT{t:exact_L2}.
Define 
\begin{equation}\label{eq:I_ndef}
N_n:= \#\{1<i\leq n : U_i < U_1\}
\end{equation}
(i.e.,\ the number of keys that fall into the left subtree of the \quickselect seed process).
To begin the derivation, note that $K_n = n - 1 + \widetilde{K}_{N_n}$, where $\widetilde{K}_{N_n}$ is the key-comparisons cost for \quickmin applied to the left subtree.  
Note also that the same equation holds as equality in law if the process~$\widetilde{K}$ has the same distribution as the process~$K$ and is independent of $N_n$. 
We also have $D = 1 + U \widetilde{D}$ with $U := U_1$  and $\widetilde{D} \disteq D$ independent.

Make the following definitions:
\begin{align}
Y_{n,0} &:= \frac{K_{N_n}-\mu_{N_n}}{N_n+1}, \nonumber\\
Y^{(0)} &:= \widetilde{D}-\mu.\label{eq:Y_D2}
\end{align}
Then we can express the residual $Y_n-Y$ in terms of these ``smaller versions'' of 
$Y_n$ and $Y$:
\begin{align}
	Y_n - Y &= \frac{n-1 + K_{N_n} - \mu_n}{n+1} - \left( 1 +\ U \widetilde{D} -\mu \right)\nonumber\\
	&= \left(\frac{N_n+1}{n+1}\right)Y_{n,0} - UY^{(0)} + \frac{n-1}{n+1} - 1 + \frac{\mu_{N_n} - \mu_n}{n+1} - U\mu + \mu \nonumber\\
	&=\left(\frac{N_n+1}{n+1}\right)Y_{n,0} - UY^{(0)} + C_n(N_n)\frac{n}{n+1} - C(U),\label{eq:Y_n_recurrence}
\end{align}
where $C_n(i):= n^{-1}(n-1 + \mu_i-\mu_n)$ and $C(x):= \mu x - 1 = 2x-1$. Observe that with these definitions, we can break up the previous equation as
\begin{equation}\label{eq:Y_W_split}
	Y_n - Y = W_1 + W_2,
\end{equation}
where 
\[
W_1 := \frac{N_n+1}{n+1} Y_{n,0} - UY^{(0)}, \quad W_2:=C_n(N_n) \frac{n}{n+1} - C(U).
\]
 Conditionally given $N_n$ and $U$, the random variable $W_2$ is constant and $W_1$ has mean zero, so 
$$
a_n^2=\E\left(Y_n -Y\right)^2 = \E W_1^2 + \E W_2^2.
$$

Consider the first term $\E W_1^2$.

\begin{lemma}

 \[
  \E W_1^2 = \frac{1}{n} \sum_{k=0}^{n-1} \frac{(k+1)^2}{(n+1)^2} a_k^2 + \frac{1}{12(n+1)}.
 \]

\end{lemma}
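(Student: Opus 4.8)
The plan is to compute $\E W_1^2$ by conditioning on the pair $(U, I_n)$ and exploiting the self-similarity of \quickmin. Work with the equality-in-law version of the recurrence \eqref{eq:Y_n_recurrence}--\eqref{eq:Y_W_split}, in which $\widetilde{K}$ is an independent copy of the \quickmin key-comparisons-count process, $\widetilde D := \lim_m \widetilde K_m/m$ is its almost sure limit, and the pair $(\widetilde K,\widetilde D)$ is independent of the seed sequence $(U_1,U_2,\dots)$, hence of $(U,I_n)$; set $p_k := (k+1)/(n+1)$, so $W_1 = p_{I_n} Y_{n,0} - U Y^{(0)}$ with $Y_{n,0} = (\widetilde K_{I_n}-\mu_{I_n})/(I_n+1)$ and $Y^{(0)} = \widetilde D - \mu$. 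The first step is to record the key conditional-law fact: \emph{conditionally on $\{U=u,\,I_n=k\}$, the pair $(Y_{n,0},Y^{(0)})$ has the same joint law as the naturally coupled \quickmin pair $(Y_k,Y)$, independently of the value of $u$.} In the chosen version this is immediate, since conditioning on $\{U=u,\,I_n=k\}$ merely substitutes the index $k$ into $(\widetilde K,\widetilde D)$, giving $(Y_{n,0},Y^{(0)}) = \bigl((\widetilde K_k-\mu_k)/(k+1),\,\widetilde D-\mu\bigr)$, which has the law of $(Y_k,Y)$ because $\widetilde D$ is the natural-coupling limit of $\widetilde K_m/m$. (If one prefers the original natural coupling, the same fact follows from noting that given $U_1=u$ the seeds falling in $(0,u)$, rescaled by $u^{-1}$, form an i.i.d.\ uniform sequence independent of $u$ and of $I_n$.)

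Next I would expand $W_1^2 = p_{I_n}^2 Y_{n,0}^2 - 2p_{I_n}U\,Y_{n,0}Y^{(0)} + U^2 (Y^{(0)})^2$ and take $\E[\,\cdot\mid U=u,\,I_n=k]$ using Step~1, obtaining
\[
\E[W_1^2\mid U=u,\,I_n=k] = p_k^2\,\E Y_k^2 - 2p_k u\,\E[Y_kY] + u^2\,\E Y^2 .
\]
Then integrate over $U$ given $I_n=k$, using the classical facts that $I_n$ is uniform on $\{0,1,\dots,n-1\}$ and that, given $I_n=k$, the pivot seed satisfies $U \disteq \mathrm{Beta}(k+1,n-k)$, so $\E[U\mid I_n=k] = p_k$ and $\E[U^2\mid I_n=k] = \tfrac{(k+1)(k+2)}{(n+1)(n+2)}$. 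Combining with the trivial identity $\E Y_k^2 - 2\E[Y_kY] = a_k^2 - v$, where $v := \E Y^2$ (from $a_k^2 = \E(Y_k-Y)^2$), and simplifying the resulting difference of ratios gives
\[
\E[W_1^2\mid I_n=k] = p_k^2\,a_k^2 + v\,\frac{(k+1)(n-k)}{(n+1)^2(n+2)} .
\]

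Finally, averaging over $I_n$ uniform on $\{0,\dots,n-1\}$ splits $\E W_1^2$ into $\tfrac1n\sum_{k=0}^{n-1}\tfrac{(k+1)^2}{(n+1)^2}a_k^2$ plus $\tfrac{v}{n(n+1)^2(n+2)}\sum_{k=0}^{n-1}(k+1)(n-k)$. The remaining sum is a polynomial identity: substituting $j=k+1$ and using $\sum_{j=1}^n j = \tfrac{n(n+1)}{2}$, $\sum_{j=1}^n j^2 = \tfrac{n(n+1)(2n+1)}{6}$ gives $\sum_{k=0}^{n-1}(k+1)(n-k) = \tfrac{n(n+1)(n+2)}{6}$, so the second term collapses to $\tfrac{v}{6(n+1)}$. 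It remains to evaluate $v = \E Y^2 = \Var(D)$ (as $\E Y = 0$) from the distributional fixed point $D \disteq 1 + U\widetilde D$ with $\widetilde D \disteq D$ independent of the uniform $U$: using $\E D = \mu = 2$ we get $\E D^2 = 1 + 2\,\E U\,\E D + \E U^2\,\E D^2 = 3 + \tfrac13\E D^2$, hence $\E D^2 = \tfrac92$ and $v = \tfrac92 - \mu^2 = \tfrac12$; thus $\tfrac{v}{6(n+1)} = \tfrac1{12(n+1)}$, which is the claim.

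I expect the only genuinely substantive point to be Step~1 — the reduction, after conditioning on $(U,I_n)$, of $(Y_{n,0},Y^{(0)})$ to the naturally coupled \quickmin pair $(Y_{I_n},Y)$, with conditional law free of $U$; everything downstream (the Beta moments, the polynomial sum, and the standard Dickman second-moment computation) is routine bookkeeping.
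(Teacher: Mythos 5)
Your proof is correct, and it rests on exactly the same ingredients as the paper's: the reduction of the conditional law of $(Y_{n,0},Y^{(0)})$ given the split data to that of the naturally coupled pair $(Y_k,Y)$ (the paper's Remark~\ref{r:Y*}), the Beta$(I_n+1,\,n-I_n)$ law of $U$ given $I_n$, uniformity of $I_n$, and the value $\E\,(Y^{(0)})^2=1/2$. The differences are organizational: the paper splits $W_1=Z_1+Z_2$ with $Z_1=\frac{I_n+1}{n+1}(Y_{n,0}-Y^{(0)})$ and $Z_2=(\frac{I_n+1}{n+1}-U)Y^{(0)}$, so the cross term dies by the Beta conditional mean and $a_{I_n}^2$ appears immediately in $\E Z_1^2$, whereas you expand $W_1^2$ wholesale, condition on $(U,I_n)$ jointly, and reassemble $a_k^2$ through the identity $\E Y_k^2-2\E[Y_kY]=a_k^2-\E Y^2$; both routes then reduce to the same polynomial sum $\sum_{k=0}^{n-1}(k+1)(n-k)=n(n+1)(n+2)/6$. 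Your conditioning on $(U,I_n)$ jointly needs the slightly stronger statement that the conditional law is free of $u$, which you correctly justify (either via the independent-copy version, which the paper itself licenses, or via the rescaled sub-seeds in the natural coupling). Finally, you derive $\E Y^2=\mathrm{Var}(D)=1/2$ from the fixed point $D\disteq 1+U\widetilde D$ rather than citing the literature as the paper does; that is a small self-contained bonus, and the computation is right.
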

\begin{proof}
If we define 
\[
	Z_1 := \frac{N_n+1}{n+1} \left(Y_{n,0}-Y^{(0)}\right), \quad 
	Z_2 := \left(\frac{N_n+1}{n+1} - U\right) Y^{(0)},
\]
then $W_1 = Z_1 + Z_2$ and  so $\E W_1^2 = \E Z_1^2+\E Z_2^2 + 2\E (Z_1 Z_2)$.

For the cross term $\E (Z_1 Z_2)$, conditionally given $N_n$ the random variable $U$ is distributed Beta$(N_n+1,n-N_n)$. Therefore,
$$
\E (Z_1 Z_2)
= \E\left\{\E \left[\left.Z_1\left(\frac{N_n+1}{n+1} - U\right)Y^{(0)}\right|N_n, Y_{n,0},Y^{(0)}\right]\right\} = 0.
$$

Next consider the term $\E Z_1^2$.  
\begin{Remark}\label{r:Y*}
\rm
The conditional joint distribution of the process $(Y_{n, 0})_{n \geq 0}$ and the random variable $Y^{(0)}$ given $N_n$ is the conditional joint distribution of the process $(Y^*_{N_n})_{n \geq 0}$ and the random variable $Y^*$ given $N_n$, where the process 
$(Y^*_n)$ and the random variable $Y^*$ are independent of $N_n$ and have (unconditionally) the same joint distribution as the process $(Y_n)$ and the random variable $Y$.
\end{Remark}
In light of the preceding remark,
$$
\E Z_1^2 = \E\left[\left(\frac{N_n+1}{n+1}\right)^2\left(Y_{N_n}^* - Y^*\right)^2\right].
$$
Since $N_n\sim \text{unif}\left\{0,1,2,\ldots,n-1\right\}$, conditioning on $N_n$ gives
$$
\E Z_1^2 = \frac{1}{n} \sum_{k=0}^{n-1} \left(\frac{k+1}{n+1}\right)^2 a_k^2.
$$

Finally, consider the term $\E Z_2^2$.  Since $Y^{(0)}$ is independent of $N_n$ and $U$, we have
$$
\E Z_2^2 = {\E Y^{(0)}}^2\,\E \left(\frac{N_n+1}{n+1} - U\right)^2.
$$
Recall that $N_n \sim \text{unif}\left\{0,1,\ldots,n-1\right\}$ and that conditionally given $N_n$, we have $U\sim \text{Beta}(N_n+1,n-N_n)$; therefore,
\begin{equation}\label{eq:Z2_exp}
\E\left(\frac{N_n+1}{n+1} - U\right)^2 = \frac{1}{n} \sum_{k=0}^{n-1} \frac{(k+1)(n-k)}{(n+1)^2(n+2)} = \frac{1}{6(n+1)}.
\end{equation}
Since ${\E Y^{(0)}}^2 = 1/2$ \cite{kp1998}, 
we have that 
$$
\E Z_2^2 = \frac{1}{12(n+1)}.
$$
Putting these calculations together, we get that
$$
\E W_1^2 = \frac{1}{n} \sum_{k=0}^{n-1} \frac{(k+1)^2}{(n+1)^2} a_k^2 + \frac{1}{12(n+1)}.
$$
\end{proof}

Now we consider the term $\E W_2^2$.
\begin{lemma}
\label{L:EW_2^2}
 \[
  \E W_2^2 = \frac{2}{3(n+1)} + \left(\frac{2}{n+1}\right)^2\left(1-\frac{H_n}{n}\right).
 \]

\end{lemma}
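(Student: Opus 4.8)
The plan is to reduce $W_2$ to a transparent form, condition on $I_n$, and then average, at the end invoking \refL{L:rev} (or a short induction) to evaluate the resulting harmonic sum.

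First I would substitute $C_n(i)=n^{-1}(n-1+\mu_i-\mu_n)$, $C(x)=2x-1$, and $\mu_i=2(i-H_i)$ into $W_2 = C_n(I_n)\tfrac{n}{n+1} - C(U)$; after cancellation this collapses to
\[
W_2 = 2\left(\frac{I_n + H_n - H_{I_n}}{n+1} - U\right),
\]
so that, writing $A := (I_n + H_n - H_{I_n})/(n+1)$, a function of $I_n$ alone, we have $W_2^2 = 4(A-U)^2$. Conditioning on $I_n$, recall that $U \mid I_n \sim \text{Beta}(I_n+1,\,n-I_n)$, with conditional mean $(I_n+1)/(n+1)$ and conditional variance $(I_n+1)(n-I_n)\big/\!\left[(n+1)^2(n+2)\right]$. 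Completing the square and using $A - (I_n+1)/(n+1) = (H_n - H_{I_n} - 1)/(n+1)$ gives
\[
\E\!\left[W_2^2 \mid I_n\right] = 4\left[\frac{(H_n - H_{I_n} - 1)^2}{(n+1)^2} + \frac{(I_n+1)(n-I_n)}{(n+1)^2(n+2)}\right].
\]

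Next I would take expectation over $I_n \sim \text{unif}\{0,\dots,n-1\}$. The second term is exactly the quantity already evaluated at \eqref{eq:Z2_exp}, so its average is $4\cdot\tfrac{1}{6(n+1)} = \tfrac{2}{3(n+1)}$, which is the first term of the claim. For the first term it suffices to prove
\[
\sum_{k=0}^{n-1}(H_n - H_k - 1)^2 = n - H_n,
\]
since then its average is $\tfrac{4}{n(n+1)^2}(n - H_n) = \big(\tfrac{2}{n+1}\big)^2\big(1 - \tfrac1n H_n\big)$, which together with the previous contribution completes the proof.

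The identity is the only real content. I would prove it by noting first, via \refL{L:rev} applied to $\sum_{k=0}^{n-1}H_k = nH_{n-1} - (n-1)$, that $\sum_{k=0}^{n-1}(H_n - H_k - 1) = 0$; hence $H_n - H_k - 1$ has mean zero over $k \sim \text{unif}\{0,\dots,n-1\}$, so the left-hand side equals $n\,\Var(H_k)$, and a second application of \refL{L:rev} to evaluate $\sum_{k=0}^{n-1}H_k^2$ yields $\Var(H_k) = 1 - \tfrac1n H_n$. Alternatively — and perhaps more cheaply — a one-line induction works: for $f(n) := \sum_{k=0}^{n-1}(H_n - H_k - 1)^2$, expanding $(H_{n+1} - H_k - 1)^2 = (H_n - H_k - 1)^2 + \tfrac{2}{n+1}(H_n - H_k - 1) + \tfrac{1}{(n+1)^2}$ and using the elementary sums $\sum_{k=0}^{n}(H_n - H_k - 1) = -1$ and $\sum_{k=0}^{n}(n+1)^{-2} = (n+1)^{-1}$ gives $f(n+1) = f(n) + 1 - (n+1)^{-1}$, which with $f(1) = 0$ gives $f(n) = n - H_n$. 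The only obstacle is the routine bookkeeping of these nested harmonic sums; the induction route avoids evaluating $\sum_k H_k^2$ in closed form.
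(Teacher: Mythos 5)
Your proof is correct, and its probabilistic skeleton is in substance the same as the paper's: your rewriting $W_2 = 2\bigl(\tfrac{I_n+H_n-H_{I_n}}{n+1}-U\bigr)$ agrees with the paper's display \eqref{eq:C_remainder}, and conditioning on $I_n$ and splitting into the conditional variance of the Beta$(I_n+1,n-I_n)$ variable plus the squared deviation of the conditional mean is exactly the paper's decomposition into $4\,\E\bigl(\tfrac{I_n+1}{n+1}-U\bigr)^2$ (handled by \eqref{eq:Z2_exp}) and the term $L_n$, with the cross term killed by $\E[U\mid I_n]=\tfrac{I_n+1}{n+1}$. Where you genuinely diverge is in evaluating the bias term: the paper expands $(H_n-H_{I_n}-1)^2$, computes $\E H_{I_n}$ and $\E H_{I_n}^2$ through repeated applications of \refL{L:rev} and the nested sum $\sum_j \sum_i \tfrac{1}{ij}$, and then grinds through roughly a page of harmonic-number algebra; you instead isolate the single identity $\sum_{k=0}^{n-1}(H_n-H_k-1)^2=n-H_n$ and prove it either as a variance computation (essentially the paper's route repackaged) or, more cheaply, by the induction $f(n+1)=f(n)+1-\tfrac{1}{n+1}$, $f(1)=0$, which needs only the elementary facts $\sum_{k=0}^{n}(H_n-H_k-1)=-1$ and avoids evaluating $\sum_k H_k^2$ altogether. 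I checked the induction and the variance computation; both are right (and the identity checks numerically for small $n$). The induction route is a real simplification of the bookkeeping; the paper's computation buys nothing extra beyond being fully explicit in the same style as its surrounding harmonic-sum manipulations.
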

\begin{proof}
We have
\begin{align}
W_2 &= C_n(N_n)\frac{n}{n+1} - C(U) \nonumber \\
&= \frac{n}{n+1}\left[\frac{1}{n}\left(n-1 + 2(N_n-H_{N_n})-2(n-H_n)\right)\right] + 1-2U \nonumber\\
&= \frac{1}{n+1}\left[2(N_n-H_{N_n}) - n +2H_n - 1\right] + 1 - 2U \nonumber \\
&= 2\left(\frac{N_n+1}{n+1} - U\right) - \frac{2}{n+1} + \frac{1}{n+1}\left(-n+2H_n -2H_{N_n} - 1\right) + 1 \nonumber \\
&= 2\left(\frac{N_n+1}{n+1} - U\right) + \frac{2}{n+1}\left(H_n-H_{N_n} - 1\right). \label{eq:C_remainder}
\end{align}
Squaring and then taking expectations, we find

\begin{align*}
	\E W_2^2 =& 4\E\left(\frac{N_n+1}{n+1} - U\right)^2 + \E \left[\left(\frac{2}{n+1}\right)^2\left(H_n-H_{N_n}-1\right)^2\right] \\
	&\quad+ 4\E\left[\left(\frac{N_n+1}{n+1} - U\right)\left(\frac{2}{n+1}\left(H_n-H_{N_n}-1\right)\right)\right].
\end{align*}
Recall that conditionally given $N_n$ we have $U\sim \text{Beta}(N_n+1,n-N_n)$, which implies that the cross term vanishes; therefore, it suffices to consider the two squared terms. From 
\eqref{eq:Z2_exp} we know that
$$
4\E\left(\frac{N_n+1}{n+1} - U\right)^2= \frac{2}{3(n+1)}.
$$
We now proceed to treat the final term
\begin{align}
L_n&:=\E\left[\left(\frac{2}{n+1}\right)^2\left(H_n-H_{N_n} - 1\right)^2\right] \nonumber \\
\label{eq:foil}
&= \left(\frac{2}{n+1}\right)^2 \left[ \E(H_n - H_{N_n})^2 - 2 \E(H_n - H_{N_n}) + 1 \right].
\end{align}
We can compute the first and second moments of $H_n - H_{N_n}$ as follows.  Fixing~$n$, for $1 \leq i \leq n$ consider the events $B_i := \{N_n < i\}$, which satisfy $\Prob(B_i) = \frac{i}{n}$ and $B_i \cap B_j = B_{i \wedge j}$ (where~$\wedge$ denotes minimum).  Note that
\[
H_n - H_{N_n} = \sum_{i = N_n + 1}^n \frac{1}{i} = \sum_{i = 1}^n \frac{\truth(B_i)}{i}
\]
Thus
\[
\E(H_n - H_{N_n}) = \sum_{i = 1}^n \frac{\Prob(B_i)}{i} = \sum_{i = 1}^n \frac{1}{n} = 1
\]
and
\begin{align*}
\E(H_n - H_{N_n})^2 
&= \E\left[ \sum_{i = 1}^n \frac{\truth(B_i)}{i} \right]^2 
= \sum_{i = 1}^n \sum_{j = 1}^n \frac{\Prob(B_i \cap B_j)}{i j}
= \frac{1}{n} \sum_{i = 1}^n \sum_{j = 1}^n \frac{i \wedge j}{i j} \\
&= \frac{2}{n} \sum_{i = 1}^n \sum_{j = 1}^i \frac{j}{i j} - \frac{1}{n} \sum_{i = 1}^n \frac{i}{i^2}
= 2 - \frac{H_n}{n}.
\end{align*}
Therefore we get that
$$
\E W_2^2 = \frac{2}{3(n+1)} + \left(\frac{2}{n+1}\right)^2 \left(1-\frac{H_n}{n}\right),
$$
as desired.
\end{proof}

We will also need the following well-known (and very easy to derive) solution to a ``divide-and-conquer'' recurrence 
in the proof of \refT{t:exact_L2}.

\begin{lemma}\label{l:rec_lemma}
 Let $(A_n)_{n\geq 0}$ and $(B_n)_{n\geq 1}$ be sequences of real numbers that satisfy
 \[
  A_n = \frac{1}{n} \sum_{k=0}^{n-1} A_k + B_n,
 \]
 for $n\geq 1$.
 Then for $n\geq 0$
 we have
 \begin{equation}\label{eq:ind_hyp}
  A_n = A_0 + B_n + \sum_{k=1}^{n-1} \frac{1}{k+1} B_k,
 \end{equation}
with
$B_0 := 0$.~\qed
\end{lemma}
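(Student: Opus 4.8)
The plan is to eliminate the running average $\tfrac1n\sum_{k=0}^{n-1}A_k$ by differencing, thereby reducing the hypothesis to a first-order recurrence that telescopes. First I would clear denominators: the hypothesis is $nA_n = \sum_{k=0}^{n-1}A_k + nB_n$ for $n\ge 1$, and, applied at index $n+1$, it gives $(n+1)A_{n+1} = \sum_{k=0}^{n}A_k + (n+1)B_{n+1}$ for $n\ge 0$. Subtracting the former from the latter for $n\ge1$ (where both are available) and using $\sum_{k=0}^{n}A_k - \sum_{k=0}^{n-1}A_k = A_n$ yields
\[
(n+1)A_{n+1} - nA_n = A_n + (n+1)B_{n+1} - nB_n,
\]
i.e.\ $A_{n+1} = A_n + B_{n+1} - \tfrac{n}{n+1}B_n$ for $n\ge1$; moreover the case $n=1$ of the hypothesis reads $A_1 = A_0 + B_1$.

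Second, I would telescope. For $n\ge2$, summing the first-order recurrence over $m=1,\dots,n-1$ and reindexing by $k=m+1$ in the first part gives
\[
A_n - A_1 = \sum_{k=2}^{n}B_k - \sum_{m=1}^{n-1}\tfrac{m}{m+1}B_m;
\]
substituting $A_1 = A_0+B_1$, writing $B_1 + \sum_{k=2}^{n}B_k = B_n + \sum_{k=1}^{n-1}B_k$, and using $1-\tfrac{k}{k+1}=\tfrac1{k+1}$ then produces exactly $A_n = A_0 + B_n + \sum_{k=1}^{n-1}\tfrac1{k+1}B_k$, which is \eqref{eq:ind_hyp}. The cases $n=0$ and $n=1$ hold trivially (empty sums being~$0$), and the convention $B_0:=0$ is purely cosmetic, absorbing the $k=0$ boundary.

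Alternatively, one can verify \eqref{eq:ind_hyp} directly by strong induction on~$n$: substitute the claimed formula for $A_0,\dots,A_{n-1}$ into the right-hand side $\tfrac1n\sum_{k=0}^{n-1}A_k + B_n$ of the hypothesis, interchange the order of the resulting double sum (exactly the manoeuvre of \refL{L:rev}), and observe that the coefficient of each $B_k$ collapses to $\tfrac1{k+1}$ via $\tfrac1n\bigl(1+\tfrac{n-1-k}{k+1}\bigr)=\tfrac1{k+1}$. Either route is entirely elementary; there is no real obstacle, only the bookkeeping of summation ranges and the boundary terms at $k=0$ and $k=n$, which is why I would treat the base cases $n\in\{0,1\}$ explicitly and carry the $B_0=0$ convention throughout.
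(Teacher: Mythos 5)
Your proof is correct; note that the paper itself states \refL{l:rec_lemma} as well known and gives no proof (the \qed\ follows the statement directly), so there is nothing to compare against. Both of your routes check out: the differencing/telescoping argument correctly yields $A_{n+1}=A_n+B_{n+1}-\tfrac{n}{n+1}B_n$ and collapses via $1-\tfrac{k}{k+1}=\tfrac1{k+1}$ to \eqref{eq:ind_hyp}, and the strong-induction alternative (with the interchange of summation as in \refL{L:rev} and the coefficient identity $\tfrac1n\bigl(1+\tfrac{n-1-k}{k+1}\bigr)=\tfrac1{k+1}$) is equally sound, with the base cases $n\in\{0,1\}$ and the $B_0:=0$ convention handled correctly.
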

\medskip

\begin{proof}[Proof of Theorem~\ref{t:exact_L2}.]
Combining the expressions for $\E W_1^2$ and $\E W_2^2$ gives
\begin{equation}\label{eq:recurrence}
(n+1)^2 a_n^2 = \frac{1}{n}\sum_{k=0}^{n-1} (k+1)^2a_k^2 + \frac{n+1}{12} + \frac{2(n+1)}{3} + 4\left(1-\frac{1}{n}H_n \right).
\end{equation}
If we define
$$
b_n:= \frac{n+1}{12} + \frac{2(n+1)}{3} + 4\left(1-\frac{1}{n}H_n \right) = \frac{3(n+1)}{4}
+ 4\left(1-\frac{1}{n}H_n \right),
$$
then 
\refL{l:rec_lemma} 
implies
$$
(n+1)^2a_n^2 = a_0^2 + \sum_{j=1}^{n-1} \frac{b_j}{j+1} + b_n.
$$
Plugging in $b_n$ gives
$$
(n+1)^2a_n^2 = a_0^2 + \sum_{j=1}^{n-1}\left[\frac{3(j+1)}{4(j+1)}+ \frac{4}{j+1}\left(1-\frac{1}{j}H_j \right)\right]  + \frac{3 (n + 1)}{4}+ 4\left(1-\frac{1}{n}H_n \right).
$$
Simplifying this expression gives
\begin{align*}
(n+1)^2a_n^2 
&= a_0^2 + \frac{3}{4}(n-1) + \sum_{j=1}^{n-1}\frac{4}{j+1}-\sum_{j=1}^{n-1}\frac{4}{j(j+1)}H_j 
+ \frac{19}{4}\\ 
&\qquad \ {} + \frac{3 n}{4} -\frac{4}{n}H_n\\
&= \frac{1}{2} + \frac{3}{2}n + 4H_n - \frac{4}{n} H_n -\sum_{j=1}^{n-1}\frac{4}{j(j+1)}H_j\\
&= \frac{1}{2} + \frac{3}{2}n + 4H_n - \frac{4}{n} H_n - 4 \left( H_n^{(2)} - \frac{H_n}{n} \right)\\
&= \frac{3}{2}n + 4H_n  - 4 H_n^{(2)} + \frac{1}{2},
\end{align*}
where $a_0^2=1/2$ was substituted in the second equality. Therefore, we can conclude that
\begin{align*}
a_n^2 
&= \E (Y_n - Y)^2 = (n + 1)^{-2} \left[ \frac{3}{2}n + 4H_n  - 4 H_n^{(2)} + \frac{1}{2} \right]\\
&= \frac{3}{2} n^{-1} + O\left(\frac{\log n}{n^2}\right).
\end{align*}

\end{proof}

%%%%%%%%%%%%%%%%%%%%%%%%%%%%%%%%%%%%%%%%%%%%%%%%%%%%%%%%%%%%%%%%%%%%%%%%%%%%%%%%%%%%%%%%
\section{\quickval\ and mixing distribution for residual limit distribution}\label{s:quickval_prelim}
%%%%%%%%%%%%%%%%%%%%%%%%%%%%%%%%%%%%%%%%%%%%%%%%%%%%%%%%%%%%%%%%%%%%%%%%%%%%%%%%%%%%%%%%

Our main theorem, \refT{t:quickval_residual_limit}, asserts that the scaled \quickval\ residual cost converges in law to a scale mixture of centered Gaussians.  In this section, we introduce needed notation and prove \refL{l:sigma_limit}, which gives an explicit representation of the mixing distribution. 

Consider a binary search tree (BST) constructed by the insertion (in order) of the $n$ seeds. Then 
{\tt QuickQuant}($n, \alpha$) follows the path from the root to the 
node storing the $m_n^{\rm th}$ smallest key, where $m_n/n\rightarrow \alpha$.

For {\tt QuickVal}($n,\alpha$), consider the same BST of seeds with the additional value~$\alpha$ inserted (last).  Then {\tt QuickVal}($n,\alpha$) follows the path from the root to this $\alpha$-node. Almost surely for $n$ large and~$k$ fixed, the difference between these two algorithms in costs associated with the $k$-th pivot is negligible to lead order \cite[(4.2)]{fn2013}. See \cite{vcff2009} or \cite{fn2013} for a more complete description.

When considering \quickval, 
we will simplify the notation since we will only need to reference one path of nodes from the root to a leaf in the \quickselect\ process tree. For this we define similar notation indexed by the pivot index (that is, by the level in the tree).
Set $L_0 := 0$, $R_0 := 1$, and $\tau_0 := 1$. Then, for $k \geq 1$, we define
\begin{eqnarray}
\tau_k &:=& \inf\{i:\, L_{k-1} < U_i < R_{k-1}\},\label{pivotIndex} \\
L_k &:=& {\bf 1}(U_{\tau_k} < \alpha) U_{\tau_{k}} + {\bf 1}(U_{\tau_k} > \alpha) L_{k-1},\label{lowerBound} \\
R_k &:=& {\bf 1}(U_{\tau_k} < \alpha) R_{k-1} + {\bf 1}(U_{\tau_k} > \alpha) U_{\tau_{k}},\label{upperBound} \\
C_k &:=& (L_{k-1}, R_{k-1}, \tau_k, U_{\tau_k}) \label{Ck}\\
X_{k,i} &:=& \truth(L_{k-1} < U_i < R_{k-1})\beta(U_i,U_{\tau_k}),\label{Xki} \\
S_{k,n} &:=& \sum_{i:\,\tau_k < i \leq n} X_{k,i}.\label{Skn}
\end{eqnarray}

\begin{Remark}
\emph{Note that~\cite{fn2013} used the notation $S_{n,k}$ for what we have called $S_{k,n}$. 
}
\end{Remark}

The random variable $\tau_k$ is the arrival time/index of the $k^{\rm th}$ pivot.
The interval $(L_k,R_k)$ gives the range of seeds to be compared to the $k^{\rm th}$ pivot in the operation of the \quickval\ algorithm.
The cost of comparing seed $i$ to the $k^{\rm th}$ pivot is given by $X_{k,i}$.
The total comparison costs attributed to the $k^{\rm th}$ pivot is $S_{k,n}$.

The cost of \quickval\ on~$n$ keys is then given by
\begin{equation}\label{eq:quickval_def}
S_n := \sum_{k=1}^\infty S_{k,n}.
\end{equation}

Define
\begin{equation*}
  \widehat{C}_K := \{C_k: k=1,\ldots ,K\},
\end{equation*}
and
\begin{equation*}
  \widehat{X}_{K,i}:= \sum_{k=1}^{K} X_{k,i}.
\end{equation*}
Then, conditionally given $\widehat{C}_K$, the random variable
\begin{equation*}
  \widehat{S}_{K,n}:= \sum_{\tau_K < i \leq n} \widehat{X}_{K,i}
\end{equation*}
is the sum of $(n-\tau_K)^+$ independent and identically distributed random variables, each with the same conditional distribution as $\widehat{X}_K := \sum_{k = 1}^K X_k$, where 
$$
X_k := {\bf 1}(L_{k - 1} < U < R_{k - 1}) \beta(U, U_{\tau_k})
$$
and~$U$ is uniformly distributed on 
$(0, 1)$ and independent of all the $U_j$'s. Here, $\widehat{X}_{K,i}$ is the cost incurred by comparing seed $i$ to pivots $1,2,\ldots K$ and $\widehat{S}_{K,n}$ is the comparison cost of all seeds that arrive after the $K$-th pivot to pivots $1,2,\ldots K$.

It will be helpful to condition on $\widehat{C}_K$ later. 
In anticipation of this, we establish notation for the conditional expectation of $X_k$ given $C_k$ (which equals the conditional expectation given $\widehat{C}_k$) and, for $k \leq \ell$, the conditional expected product of $X_k$ and $X_\ell$ given 
$\widehat{C}_{\ell}$, as follows:
\begin{align}\label{eq:I_k}
I_k :=& \E[X_k|C_k] = \int_{L_{k-1}}^{R_{k-1}} \beta(u,U_{\tau_k})\, du,  \\
I_{2,k,\ell} \label{eq:I_2kl}
:=& \E[X_k X_\ell|\widehat{C}_{\ell}]
= \int_{L_{\ell-1}}^{R_{\ell-1}} \beta(u,U_{\tau_k})\beta(u,U_{\tau_\ell})\, du.
\end{align}
We symmetrize the definition of $I_{2, k, \ell}$ in the indices~$k$ and~$\ell$ by setting
$I_{2, k, \ell}: = I_{2, \ell, k}$ for $k > \ell$.  Finally, we write $I_{2, k}$ as shorthand for $I_{2, k, k}$.

We now calculate the mean and variance of $\widehat{X}_K$ with the intention of applying the classical central limit theorem; everything is done conditionally given $\widehat{C}_K$.  Define 
$\mu_{K}$ and $\sigma_K^2$ to be the conditional mean and conditional variance of $\widehat{X}_K$ given $\widehat{C}_K$, respectively.  Then
\begin{equation*}
  \mu_K = \sum_{k=1}^K I_k, \quad\quad \sigma_K^2 =  \sum_{1\leq k, \ell \leq K} \left(I_{2,k,\ell} - I_kI_\ell\right).
\end{equation*}
We next present a condition guaranteeing that $\sigma_K^2$ behaves well as $K \to \infty$. 
We note in passing that this condition is also the sufficient condition of Theorem~3.1 in~\cite{fn2013} ensuring that $S_n / n$ converges in $L^2$ to
\begin{equation}
\label{eq:Sdef}
S := \sum_{k \geq 1} I_k.
\end{equation}

\begin{lemma}
\label{l:sigma_limit}
 If
  \begin{equation}
    \sum_{k=1}^\infty \left(\E I_{2,k}\right)^{1/2} < \infty,
    \label{eq:sigma_infty_assumption}
  \end{equation}
then both almost surely and in $L^1$ we have that {\rm (i)}~the two series on the right in the equation
  \begin{equation}
  \label{eq:sigma_infty}
    \sigma^2_\infty 
    := \sum_{k=1}^\infty (I_{2,k}-I_k^2) + 2\sum_{\ell=1}^\infty \sum_{k=1}^{\ell-1} (I_{2,k,\ell} - I_k I_\ell).
  \end{equation}
converge absolutely,  {\rm (ii)}~the equation holds, and  {\rm (iii)} $\sigma_K \LOneTo \sigma_{\infty}$ as 
$K \to \infty$.
\end{lemma}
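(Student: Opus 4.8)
The plan is to reduce all three assertions to a single application of the dominated convergence theorem, by controlling every quantity appearing in \eqref{eq:sigma_infty} — and $\sigma_K^2$ itself — by the random variable $T := \sum_{k=1}^{\infty} I_{2,k}^{1/2}$. Two ingredients are needed. The first is the deterministic observation, immediate from \eqref{pivotIndex}--\eqref{upperBound}, that the intervals $(L_k, R_k)$ form a strictly decreasing nested sequence in~$k$: whichever of $U_{\tau_k} < \alpha$ or $U_{\tau_k} > \alpha$ occurs, one endpoint of $(L_{k-1}, R_{k-1})$ is replaced by $U_{\tau_k} \in (L_{k-1}, R_{k-1})$ and the other is retained. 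The second is the Cauchy--Schwarz inequality, which I will use twice: once inside the integrals defining the $I$'s, and once — the key move — at the level of expectations.

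First I would record the pointwise estimates, all holding almost surely. Conditioning on $C_k$ and using Jensen, $I_k^2 = (\E[X_k \mid C_k])^2 \le \E[X_k^2 \mid C_k] = I_{2,k}$, so $0 \le I_{2,k} - I_k^2 \le I_{2,k}$; and since $R_{k-1} - L_{k-1} \le 1$, Cauchy--Schwarz gives $I_k \le I_{2,k}^{1/2}$. For $k \le \ell$, nestedness gives $(L_{\ell-1}, R_{\ell-1}) \subseteq (L_{k-1}, R_{k-1})$, so Cauchy--Schwarz in the integral yields
\[
  0 \le I_{2,k,\ell} \le \Bigl( \int_{L_{\ell-1}}^{R_{\ell-1}} \beta^2(u, U_{\tau_k})\,du \Bigr)^{1/2} I_{2,\ell}^{1/2} \le I_{2,k}^{1/2} I_{2,\ell}^{1/2} ,
\]
and combining with $0 \le I_k I_\ell \le I_{2,k}^{1/2} I_{2,\ell}^{1/2}$ we get $|I_{2,k,\ell} - I_k I_\ell| \le I_{2,k}^{1/2} I_{2,\ell}^{1/2}$ for all $k, \ell$ (the case $k > \ell$ by the symmetry convention, $k = \ell$ trivially). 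Summing over $1 \le k, \ell \le K$, it follows that every partial sum of each of the two series in \eqref{eq:sigma_infty}, and $\sigma_K^2 = \sum_{1 \le k, \ell \le K} (I_{2,k,\ell} - I_k I_\ell)$ itself, is at most $T^2$ in absolute value.

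Next I would show $T \in L^2$, which is where the hypothesis enters. By Tonelli and the Cauchy--Schwarz inequality for expectations,
\[
  \E\, T^2 = \sum_{k, \ell \ge 1} \E\bigl[ I_{2,k}^{1/2} I_{2,\ell}^{1/2} \bigr] \le \sum_{k, \ell \ge 1} (\E I_{2,k})^{1/2} (\E I_{2,\ell})^{1/2} = \Bigl( \sum_{k \ge 1} (\E I_{2,k})^{1/2} \Bigr)^{\!2} < \infty
\]
by \eqref{eq:sigma_infty_assumption}. In particular $T < \infty$ almost surely and $T \in L^1$.

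It then remains to assemble the conclusions. From the pointwise bounds, $\sum_k |I_{2,k} - I_k^2| \le \sum_k I_{2,k} \le T^2$ and $2 \sum_\ell \sum_{k < \ell} |I_{2,k,\ell} - I_k I_\ell| \le T^2$, so both series in \eqref{eq:sigma_infty} converge absolutely almost surely; as their absolute-value partial sums increase to these limits and are dominated by $2T^2 \in L^1$, monotone convergence also gives convergence of the tails to~$0$ in $L^1$, establishing~(i). Assertion~(ii) is then immediate: $\sigma_K^2 = \sum_{k=1}^{K} (I_{2,k} - I_k^2) + 2 \sum_{\ell=1}^{K} \sum_{k=1}^{\ell-1} (I_{2,k,\ell} - I_k I_\ell) \to \sigma_\infty^2$ almost surely by~(i), and, since $0 \le \sigma_K^2 \le T^2 \in L^1$, also in $L^1$. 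Finally, $\sigma_\infty^2 \ge 0$ because each $\sigma_K^2 \ge 0$, so $\sigma_K \to \sigma_\infty$ almost surely by continuity of the square root; and $0 \le \sigma_K \le T \in L^1$ for every~$K$, so dominated convergence gives $\sigma_K \LOneTo \sigma_\infty$, which is~(iii). The one point requiring care is the second (expectation-level) use of Cauchy--Schwarz: the pointwise domination by $T^2$ does not by itself yield the $L^1$ statements, and it is precisely this inequality that converts the assumption $\sum_k (\E I_{2,k})^{1/2} < \infty$ into $\E T^2 < \infty$; the rest is routine manipulation of tails of convergent series.
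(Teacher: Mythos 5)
Your proof is correct and follows essentially the same route as the paper: bound every term by $|I_{2,k,\ell}-I_kI_\ell|\le I_{2,k}^{1/2}I_{2,\ell}^{1/2}$ and reduce all three conclusions to $\E\bigl(\sum_{k\ge 1} I_{2,k}^{1/2}\bigr)^2<\infty$, which follows from \eqref{eq:sigma_infty_assumption}. The only (cosmetic) differences are that the paper obtains the cross-term bound by viewing $I_{2,k,\ell}-I_kI_\ell$ as a conditional covariance and applying conditional Cauchy--Schwarz, and gets $\E T^2<\infty$ via the $L^2$ triangle inequality, whereas you argue directly on the integrals (using nestedness of the intervals) and expand the square with Tonelli plus termwise Cauchy--Schwarz; you also spell out the dominated-convergence assembly that the paper leaves implicit.
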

\noindent

\begin{proof}
Recall the notation $X_k = {\bf 1}(L_{k - 1} < U < R_{k - 1}) \beta(U, U_{\tau_k})$ from above.  Consider $1 \leq k \leq \ell$.  The term  $I_{2,k,\ell} - I_k I_\ell$ equals the conditional covariance of $X_k$ and $X_{\ell}$ given $\widehat{C}_{\ell}$, and the absolute value of this conditional covariance is bounded above by the product of the conditional $L^2$-norms, namely, $I_{2, k}^{1/2} I_{2, \ell}^{1/2}$.  Thus for the three desired conclusions it is sufficient that $\E \left( \sum_{k = 1}^{\infty} I_{2, k}^{1/2} \right)^2 < \infty$.  But
$$
\E \left( \sum_{k = 1}^{\infty} I_{2, k}^{1/2} \right)^2 
= \left\| \sum_{k = 1}^{\infty} I_{2, k}^{1/2} \right\|_2^2
\leq \left( \sum_{k = 1}^{\infty} \left\| I_{2, k}^{1/2} \right\|_2 \right)^2
 = \left( \sum_{k = 1}^{\infty} \left(\E I_{2,k}\right)^{1/2} \right)^2.
$$
\end{proof}
\begin{Remark}
\emph{
In light of the absolute convergence noted in conclusion~(i) of Lemma \ref{l:sigma_limit}, we may unambiguously write
\begin{equation}
\label{eq:sigma_infty_simplified}
\sigma^2_{\infty} = \sum_{1\leq k, \ell < \infty} (I_{2, k, \ell} - I_k I_{\ell}),
\end{equation}
both in $L^1$ and almost surely.
}
\end{Remark}
\begin{Remark}
\label{r:tameness}
\emph{
Note that if the source-specific cost function $\beta$ is $\epsilon$-tame for some $\epsilon<1/2$, then,
by \refL{Ink} with $s = 2$ and $r = 1$,
condition~\eqref{eq:sigma_infty_assumption} in \refL{l:sigma_limit} is satisfied,
because the series there enjoys geometric convergence.
}
\end{Remark}

%%%%%%%%%%%%%%%%%%%%%%%%%%%%%%%%%%%%%%%%%%%%%%%%%%%%%%%%%%%%%%%%%%%%%%%%%%%%%%%%%%%%%%%%
\section{Convergence}\label{s:quickval_clt}
%%%%%%%%%%%%%%%%%%%%%%%%%%%%%%%%%%%%%%%%%%%%%%%%%%%%%%%%%%%%%%%%%%%%%%%%%%%%%%%%%%%%%%%%

Our main result is that, for a suitably tame cost function, the \quickval\ residual 
converges in law to a scale-mixture of centered Gaussians. 
Furthermore, we have the explicit representation of \refL{l:sigma_limit} for the random scale $\sigma_{\infty}$ as an infinite series of random variables that depend on conditional variances and covariances related to the source-specific cost functions [see~\eqref{eq:sigma_infty_simplified} and \eqref{eq:I_k}--\eqref{eq:I_2kl}].
\begin{theorem}\label{t:quickval_residual_limit}
  Suppose that the cost function $\beta$ is $\epsilon$-tame with $\epsilon < 1/2$.  Then
  \begin{equation*}
   \sqrt{n} \left( \frac{S_n}{n} - S \right)\,\Lto\,\sigma_\infty Z,
  \end{equation*}
  where $Z$ has a standard normal distribution and is independent of $\sigma_\infty$.
  \label{t:quickval}
\end{theorem}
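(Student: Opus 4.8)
The plan is to decompose $S_n$ along the path of pivots used by \quickval, conditioning on enough of the random structure that the remaining randomness is a sum of i.i.d.\ terms, and then to apply a Lindeberg-type CLT layer by layer.  Fix a level $K$ (to be sent to $\infty$ after $n$).  Write $S_n = \widehat{S}_{K,n} + (\text{cost attributed to pivots past level } K)$, where $\widehat{S}_{K,n} = \sum_{\tau_K < i \le n} \widehat{X}_{K,i}$ as in the excerpt.  Conditionally on $\widehat{C}_K$, the summands $\widehat{X}_{K,i}$ are i.i.d.\ copies of $\widehat{X}_K$ with conditional mean $\mu_K$ and conditional variance $\sigma_K^2$, so the classical CLT gives
\[
  \frac{\widehat{S}_{K,n} - (n - \tau_K)^+ \mu_K}{\sqrt{n}} \;\Lto\; \sigma_K Z_K
\]
conditionally (hence, after checking that the conditional law converges in the appropriate sense, unconditionally as a mixture), with $Z_K$ standard normal independent of $\sigma_K$.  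The Lindeberg condition here is handled using tameness: $\epsilon$-tameness with $\epsilon<1/2$ forces $\widehat{X}_K$ to have finite conditional variance (indeed higher moments via \refL{Ink}), so the normalized truncated second moments vanish.

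The second ingredient is to control the deterministic centering.  We have $(n-\tau_K)^+\mu_K/n \to \mu_K$ a.s., and $n^{-1}\E[\widehat{S}_{K,n}\mid\widehat{C}_K] - \mu_K = O(\tau_K/n)$, so after multiplying by $\sqrt n$ we must be careful: the natural recentering is by $n S_K^{(n)}$ where $S_K := \sum_{k=1}^K I_k$, and one shows $\sqrt n\,\bigl(\tfrac{\widehat S_{K,n}}{n} - \tfrac{(n-\tau_K)^+}{n}\mu_K\bigr)$ and $\sqrt n\,\bigl(\tfrac{(n-\tau_K)^+}{n}\mu_K - \mu_K\bigr)$ each converge, the latter to $0$ in probability because $\tau_K/\sqrt n \to 0$.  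Combining, $\sqrt n\,(\widehat S_{K,n}/n - \mu_K) \Lto \sigma_K Z_K$.  Meanwhile the tail piece $S_n - \widehat S_{K,n} - (nS - n\mu_K)$, once divided by $\sqrt n$, must be shown to be negligible \emph{uniformly in $n$ as $K\to\infty$}: this is a double-limit interchange.  One bounds its $L^2$-norm (conditionally and then unconditionally) by something like $\bigl(\sum_{k>K}(\E I_{2,k})^{1/2}\bigr)^2$, which $\to 0$ by the summability hypothesis \eqref{eq:sigma_infty_assumption} guaranteed by \refR{r:tameness}; a clean way is to note $S_n/n - S = \sum_k (S_{k,n}/n - I_k \cdot(\text{something}))$ and estimate the contribution of levels beyond $K$ using the i.i.d.\ structure and \refL{Ink}.

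To assemble the pieces I would invoke a standard three-series / double-array limit theorem (e.g.\ the ``$\epsilon$-$\delta$'' lemma that if $X_{n,K}\Lto Y_K$ for each $K$, $Y_K\to Y$, and $\limsup_n \Prob(|X_n - X_{n,K}|>\delta)\to 0$ as $K\to\infty$, then $X_n\Lto Y$), with $X_n = \sqrt n(S_n/n - S)$, $X_{n,K} = \sqrt n(\widehat S_{K,n}/n - \mu_K)$, $Y_K = \sigma_K Z_K$, and $Y = \sigma_\infty Z$.  The convergence $Y_K \Lto \sigma_\infty Z$ follows from $\sigma_K \LOneTo \sigma_\infty$ (conclusion (iii) of \refL{l:sigma_limit}), after arranging the $Z_K$'s consistently or simply passing to characteristic functions: $\E e^{it\sigma_K Z_K} = \E e^{-t^2\sigma_K^2/2} \to \E e^{-t^2\sigma_\infty^2/2}$ by dominated convergence once $\sigma_K^2 \to \sigma_\infty^2$ in probability.  (Independence of $Z$ from $\sigma_\infty$ in the limit is automatic from this characteristic-function computation.)

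The main obstacle is the uniform-in-$n$ tail control, i.e.\ showing $\limsup_{n}\Prob\bigl(\sqrt n\,|S_n/n - S - (\widehat S_{K,n}/n - \mu_K)|>\delta\bigr)\to 0$ as $K\to\infty$.  This requires a genuinely quantitative estimate, not merely the $L^2$-convergence of $S_n/n$ to $S$ from \cite{fn2013}, because of the $\sqrt n$ amplification: one needs the $L^2$-norm of the level-$>K$ residual to be $O(n^{-1/2})$ with a constant that is summable in $K$.  Here the geometric decay in \refL{Ink} (with $s=2$, $r=1$, valid since $\epsilon<1/2$) is exactly what makes $\sum_{k>K}(\E I_{2,k})^{1/2}$ the tail of a convergent series, and combining this with the i.i.d.\ variance bound $\Var(\widehat S_{K,n}\mid\widehat C_K)=(n-\tau_K)^+\sigma_K^2\le n\sigma_K^2$ and a cross-level Cauchy--Schwarz (exactly as in the proof of \refL{l:sigma_limit}) should close the argument.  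A secondary technical point is justifying that conditional CLT convergence upgrades to unconditional mixture convergence; this is routine via the bounded-convergence theorem applied to conditional characteristic functions, using that $\sigma_K^2$ is a.s.\ finite and that $\tau_K<\infty$ a.s.
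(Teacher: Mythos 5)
Your outline reproduces the paper's strategy for the fluctuation part almost exactly: the conditional classical CLT given $\widehat{C}_K$ (the paper's \refP{p:approx_limit}), the upgrade to unconditional convergence via conditional characteristic functions/distribution functions, the $K$-uniform-in-$n$ control of the levels beyond $K$ by Minkowski plus the per-level conditional variance bound $n\,\E I_{2,k}$ (the paper's \refL{l:large_k}, giving exactly $\epsilon_K=\sum_{k>K}(\E I_{2,k})^{1/2}$), the limsup/liminf ``double-limit'' assembly (the paper's \refP{p:W_n_limit}), and $\sigma_K\to\sigma_\infty$ from \refL{l:sigma_limit}. All of that is fine, and the Lindeberg worry is vacuous since conditionally the summands are i.i.d.\ with finite variance.

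The genuine gap is in the recentering. Your i.i.d./variance machinery only centers level $k$ at its \emph{conditional} mean $(n-\tau_k)^+I_k$, whereas the theorem centers at $nI_k$ (since $S=\sum_k I_k$). The discrepancy is the nonnegative mean-shift term $n^{-1/2}\sum_{k}\min(n,\tau_k)\,I_k$, and you try to absorb it into the ``level $>K$ tail'' with an $L^2$ bound of order $\epsilon_K$ uniform in $n$. That cannot work: $\E I_{2,k}$ carries no information about $\tau_k$; the variables $\tau_k$ and $I_k$ are strongly dependent through $(L_{k-1},R_{k-1})$ (a short interval makes $\tau_k$ huge while tameness only gives $I_k\lesssim (R_{k-1}-L_{k-1})^{1-\epsilon}$); and $\E[\tau_k I_k]$ need not even be summable in $k$ --- for $\epsilon$ close to $1/2$ it can grow geometrically, so no bound of the proposed form, uniform in $n$ and vanishing as $K\to\infty$, is available. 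The correct statement is an $n\to\infty$ statement for \emph{all} levels at once, which is the paper's \refP{p:n_tau_k_limit}: one writes $\min(n,\tau_k)\le \tau_k^{(1/2)+\delta}n^{(1/2)-\delta}$ with $0<\delta<(1/2)-\epsilon$, so the term is $\le n^{-\delta}\sum_k\tau_k^{(1/2)+\delta}I_k$, and then proves $\sum_k\E\bigl[\tau_k^{(1/2)+\delta}I_k\bigr]<\infty$ by conditioning on $\mathcal{F}_k$, stochastically dominating $\tau_k$ by a sum of $k$ i.i.d.\ geometrics with success probability $R_{k-1}-L_{k-1}$, bounding the fractional geometric moment via a polylogarithm estimate, and invoking \cite[Lemma~3.5]{fn2013} for $\E(R_{k-1}-L_{k-1})^{j+(1/2)-\epsilon-\delta}$ to get geometric decay in $k$. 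This roughly two-page argument is where the hypothesis $\epsilon<1/2$ is used in an essential quantitative way, and it is missing from your proposal; the variance bound $\Var(\widehat S_{K,n}\mid\widehat C_K)\le n\sigma_K^2$ and cross-level Cauchy--Schwarz you invoke see only fluctuations about the conditional means and cannot detect this shift.
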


We approach the proof of \refT{t:quickval_residual_limit} in two parts. First, in \refP{p:approx_limit} we apply the central limit theorem to an approximation $\widehat{S}_{K,n}$ of the cost of 
\quickval\ $S_n$.
Second, we show that the error due to the approximation $\widehat{S}_{K,n}$ is negligible in the limit, culminating in the results of Propositions~\ref{p:W_n_limit} and \ref{p:n_tau_k_limit}. 

Before proving \refT{t:quickval_residual_limit}, we state a corollary to \refT{t:quickval_residual_limit} for \quickmin. 
Recall that \quickmin\ is \quickselect\ applied to find the minimum of the keys.
Using a general source-specific cost function $\beta$, denote the cost of \quickmin\ on $n$~keys by $V_n$. 
Since the operation of \quickmin\ is the same as that of \quickval\ with $\alpha=0$, \refT{t:quickval} implies the following convergence for the cost of \quickmin\ with the same mild tameness condition on the source-specific cost function. 

\begin{corollary}
\label{c:quickmin}
 Suppose that the source-specific cost function $\beta$ is $\epsilon$-tame with $\epsilon < 1/2$.  Then
  \begin{equation*}
    \sqrt{n} \left( \frac{V_n}{n} - S \right)\,\Lto\,\sigma_\infty Z,
  \end{equation*}
  where $Z$ has a standard normal distribution and is independent of $\sigma_\infty$.
\end{corollary}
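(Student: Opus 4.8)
The plan is to observe that \quickmin\ is literally the $\alpha = 0$ instance of \quickval\ under the natural coupling, so that the corollary is an immediate specialization of \refT{t:quickval_residual_limit}. Concretely, I need only check two things: that the cost $V_n$ of \quickmin\ on~$n$ keys coincides with the \quickval\ cost $S_n$ when $\alpha = 0$, and that the tameness hypothesis of the corollary is exactly the hypothesis of \refT{t:quickval_residual_limit} in this case.

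For the first point I would trace through the recursion \eqref{lowerBound}--\eqref{upperBound} with $\alpha = 0$. Since the seeds lie in $(0,1)$ almost surely, we have $U_{\tau_k} > 0 = \alpha$ for every~$k$, so \eqref{lowerBound}--\eqref{upperBound} degenerate to $L_k = L_{k-1}$ and $R_k = U_{\tau_k}$; inductively, $L_k \equiv 0$ and $(L_{k-1}, R_{k-1}) = (0, U_{\tau_{k-1}})$. Hence the pivot sequence $(U_{\tau_k})_{k \geq 1}$ singled out by \quickval($n,0$) is exactly the leftmost root-to-leaf path in the \quickselect\ seed process built from $U_1, \dots, U_n$, which is precisely the sequence of pivots used by \quickmin. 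Moreover, by \eqref{Xki}--\eqref{Skn}, $S_{k,n}$ is the sum of $\beta(U_i, U_{\tau_k})$ over all of the first~$n$ seeds other than the pivot that fall in $(0, U_{\tau_k})$, i.e., precisely those compared to the $k$th pivot by \quickmin; summing over~$k$ via \eqref{eq:quickval_def} gives $V_n = S_n$ (as random variables on the common probability space, not merely in distribution). Almost surely only finitely many terms of \eqref{eq:quickval_def} are nonzero, in agreement with the termination of \quickmin; the value $\alpha$ only locates the path and contributes nothing to $S_n$.

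For the second point, $\epsilon$-tameness of the source-specific cost function~$\beta$ (\refD{d:tameness}) is a property of~$\beta$ alone and does not involve~$\alpha$, so the hypothesis of the corollary is verbatim the hypothesis of \refT{t:quickval_residual_limit}. Applying that theorem with $\alpha = 0$ and using $V_n = S_n$ then gives
\[
\sqrt{n}\left(\frac{V_n}{n} - S\right) \;=\; \sqrt{n}\left(\frac{S_n}{n} - S\right)\;\Lto\;\sigma_\infty Z,
\]
with $Z$ standard normal and independent of $\sigma_\infty$, where $S$ and $\sigma_\infty$ are the objects \eqref{eq:Sdef} and \eqref{eq:sigma_infty} (equivalently \eqref{eq:sigma_infty_simplified}) evaluated at $\alpha = 0$, i.e., with $L_{k-1} = 0$ and $R_{k-1} = U_{\tau_{k-1}}$ throughout the integrals \eqref{eq:I_k}--\eqref{eq:I_2kl}. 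There is essentially no obstacle here; the only step requiring a moment's care is the identification $V_n = S_n$, and that is immediate once the \quickval\ recursion is seen to collapse for $\alpha = 0$.
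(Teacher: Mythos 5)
Your proposal is correct and follows exactly the paper's route: the paper derives the corollary simply by noting that \quickmin\ operates identically to \quickval\ with $\alpha = 0$ (so $V_n = S_n$ under the natural coupling) and then specializing \refT{t:quickval_residual_limit}, whose tameness hypothesis does not involve~$\alpha$. Your tracing of the recursion \eqref{lowerBound}--\eqref{upperBound} to see $L_k \equiv 0$, $R_k = U_{\tau_k}$ merely spells out the identification that the paper states in one sentence (and echoes in \refR{r:sigma_infinity_quickmin}).
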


\begin{Remark}\label{r:sigma_infinity_quickmin}
{\em
In the key-comparisons case $\beta = 1$ (which is $\epsilon$-tame for every $\epsilon \geq 0$)
for $k \geq 0$ we have
$L_k \equiv 0$ and $R_k \equiv U_{\tau_k}$, with the convention $U_{\tau_0} := 1$.  Hence 
$I_k = U_{\tau_{k - 1}}$ for $k \geq 1$, and $I_{2, k, \ell} = U_{\tau_{\ell - 1}}$ for $1 \leq k \leq \ell$.
Therefore 
$S = \sum_{k \geq 1} U_{\tau_{k - 1}} = 1 + \sum_{k \geq 1} U_{\tau_k}$ and
$$
\sigma_{\infty}^2 
= \sum_{1 \leq k, \ell < \infty} (1 - U_{\tau_k}) U_{\tau_{\ell}}
$$
in~\refC{c:quickmin}.  To further simplify the understanding of 
$\sigma^2_{\infty}$, and hence of the limit
in \refC{c:quickmin} in this case, observe that $U_{\tau_1}, U_{\tau_2}, \dots$ have the same joint distribution as the cumulative products $U_1, U_1 U_2, \dots$.
Thus
\begin{equation*}
  \sigma_\infty^2 \Leq \sum_{1\leq k,\ell <\infty} \left[\left(1 - \prod_{i=1}^k U_i \right)\prod_{j=1}^\ell U_j\right].
\end{equation*}
}
\end{Remark}
\bigskip

Define
\begin{equation*}
  T_{K,n}:= \frac{\widehat{S}_{K,n} - (n - \tau_K)^+ \mu_K}{\sqrt{n}}.
\end{equation*}

\begin{proposition}\label{p:approx_limit}
Fix $K \in \{1, 2, \dots\}$.
  Suppose that
\begin{equation*}
    \E I_{2,k} < \infty
  \end{equation*}
for $k=1,2,\ldots, K$. Then
\begin{equation*}
  T_{K,n\,}\Lto\,\sigma_K Z
\end{equation*}
as $n\rightarrow\infty$, where $Z$ has a standard normal distribution independent of $\sigma_K$.
\end{proposition}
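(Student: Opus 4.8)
The plan is to condition on $\widehat{C}_K$, apply the classical Lindeberg--L\'evy central limit theorem to the resulting conditionally i.i.d.\ sum, and then integrate out the conditioning at the level of characteristic functions. The first step is to invoke the conditional structure already recorded in \refS{s:quickval_prelim}: given $\widehat{C}_K$, the random variable $\widehat{S}_{K,n}$ is a sum of $m := (n - \tau_K)^+$ independent copies of $\widehat{X}_K$ under its conditional law, with conditional mean $\mu_K$ and conditional variance $\sigma_K^2$. For the conditional CLT to be available on a set of full probability I need the conditional second moment of $\widehat{X}_K$ to be a.s.\ finite; here $\E[\widehat{X}_K^2 \mid \widehat{C}_K] = \sum_{1 \le k, \ell \le K} I_{2,k,\ell}$, and the Cauchy--Schwarz bound $|I_{2,k,\ell}| \le I_{2,k}^{1/2} I_{2,\ell}^{1/2}$ reduces this to the a.s.\ finiteness of $I_{2,k}$ for $k = 1, \ldots, K$, which is immediate from the hypothesis $\E I_{2,k} < \infty$.

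Next I would write $T_{K,n} = \sqrt{m/n}\cdot m^{-1/2} \sum_{j=1}^m (\xi_j - \mu_K)$, where $\xi_1, \xi_2, \ldots$ are the conditionally i.i.d.\ summands distributed as $\widehat{X}_K$. Since $\tau_K < \infty$ almost surely, $m \to \infty$ and $m/n \to 1$ almost surely, so, conditionally given $\widehat{C}_K$, Lindeberg--L\'evy gives $m^{-1/2}\sum_{j=1}^m(\xi_j - \mu_K) \Lto N(0, \sigma_K^2)$ and Slutsky absorbs the factor $\sqrt{m/n}$. Concretely, for each fixed $t \in \R$ one has $\E[e^{i t T_{K,n}} \mid \widehat{C}_K] = \psi(t/\sqrt{n})^m$, where $\psi$ is the conditional characteristic function of $\xi_1 - \mu_K$; the expansion $\psi(s) = 1 - \tfrac12 \sigma_K^2 s^2 + o(s^2)$ together with $m \sim n$ yields
$$
\E[e^{i t T_{K,n}} \mid \widehat{C}_K] \longrightarrow e^{-\frac12 t^2 \sigma_K^2}\quad\text{almost surely.}
$$

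Finally, since the conditional characteristic function has modulus at most $1$, the bounded convergence theorem gives $\E[e^{i t T_{K,n}}] \to \E[e^{-\frac12 t^2 \sigma_K^2}]$; and $\E[e^{-\frac12 t^2 \sigma_K^2}]$ is exactly the characteristic function of $\sigma_K Z$ with $Z \sim N(0,1)$ independent of $\sigma_K$, because conditioning on $\sigma_K$ gives $\E[e^{i t \sigma_K Z} \mid \sigma_K] = e^{-\frac12 t^2 \sigma_K^2}$. L\'evy's continuity theorem then delivers $T_{K,n} \Lto \sigma_K Z$, which is the assertion.

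The argument is essentially routine; the only spots calling for care---and the closest thing to an obstacle---are (i) checking that the conditional second moment of $\widehat{X}_K$ is a.s.\ finite under the stated hypothesis, which is the Cauchy--Schwarz reduction above, and (ii) the harmless replacement of the normalization $\sqrt{n}$ by $\sqrt{(n-\tau_K)^+}$, which is legitimate precisely because $\tau_K$ is almost surely finite.
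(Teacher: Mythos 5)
Your proposal is correct and follows essentially the same route as the paper: apply the classical CLT conditionally given $\widehat{C}_K$, use the almost-sure finiteness of $\tau_K$ (via Slutsky) to replace $\sqrt{(n-\tau_K)^+}$ by $\sqrt{n}$, and then de-condition by bounded convergence. The only cosmetic difference is that you de-condition at the level of characteristic functions with L\'evy's continuity theorem, whereas the paper applies dominated convergence to conditional distribution functions; both are the same argument in substance.
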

\begin{proof}

The 
classical central limit theorem for independent and identically distributed random variables 
applied conditionally given $\widehat{C}_K$ yields
\begin{equation}
  \Lc\left(\frac{\widehat{S}_{K,n} - (n - \tau_K)^+ \mu_K}{\sqrt{(n-\tau_K)^+}}\Big| \widehat{C}_K\right)\,\Lto\,\rm{N}(0,\sigma_K^2).
  \label{eq:cond_clt}
\end{equation}
Since $\tau_K$ is finite almost surely, Slutsky's theorem (applied conditionally given $\widehat{C}_K$) implies that we can replace $\sqrt{(n-\tau_K)^+}$ by $\sqrt{n}$ in the denominator of \eqref{eq:cond_clt}. 
Finally, applying the dominated convergence theorem to conditional distribution functions gives that the resulting conditional convergence in distribution in~\eqref{eq:cond_clt} holds unconditionally.
\end{proof}
Define
\begin{equation*}
  W_{K,n} := \frac{1}{\sqrt{n}} \sum_{k=1}^K \sum_{\tau_k < i \leq n} (X_{k,i} - I_k), \quad\quad
  \overline{W}_{K,n} := \frac{1}{\sqrt{n}} \sum_{k=K+1}^\infty \sum_{\tau_k < i \leq n} (X_{k,i} - I_k),
\end{equation*}
and let
\begin{equation*}
  W_{n} := W_{K,n} + \overline{W}_{K,n}.
\end{equation*}
Note that $W_n$ does not depend on $K$.  We can write $W_n$ in terms of the cost of \quickval\ as follows:
\begin{equation*}
W_n = \frac{1}{\sqrt{n}} \left( S_n - \sum_{k=1}^\infty (n-\tau_k)^+ I_k \right).
\end{equation*}
We prove that $W_n\,\Lto\,\sigma_\infty Z$ (which is \refP{p:W_n_limit}) in three parts. 
First (Lemma \ref{l:small_k}) we show that $\abs{T_{K,n} - W_{K,n}}\to 0$ almost surely.  
Next (\refL{l:large_k}) we show that $\norm{\overline{W}_{K,n}}_2$ is negligible as first $n \rightarrow\infty$ and then $K\rightarrow\infty$. 
Lastly (see the proof below of \refP{p:W_n_limit}), an application of Markov's inequality gives the desired convergence.

\begin{lemma}
  For $K$ fixed, if $\E I_k < \infty$ for $k=1,2,\ldots,K$, then
  \begin{equation*}
    \abs{T_{K,n} - W_{K,n}} \to 0 
  \end{equation*}
almost surely as $n\rightarrow\infty$. 
  \label{l:small_k}
\end{lemma}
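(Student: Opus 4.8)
The plan is to show that, almost surely, $\sqrt{n}\,(W_{K,n} - T_{K,n})$ stabilizes to a single finite random variable once $n \ge \tau_K$, so that after dividing by $\sqrt{n}$ the difference tends to $0$.

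First I would unwind the definitions so that both $\sqrt n\,W_{K,n}$ and $\sqrt n\,T_{K,n}$ appear as the same kind of double sum. Writing $\widehat{S}_{K,n} = \sum_{\tau_K < i \le n}\sum_{k=1}^K X_{k,i}$, observing that the number of indices~$i$ with $\tau_K < i \le n$ is exactly $(n - \tau_K)^+$, and recalling $\mu_K = \sum_{k=1}^K I_k$, one obtains
\[
  \sqrt n\,T_{K,n} = \sum_{k=1}^K \sum_{\tau_K < i \le n} (X_{k,i} - I_k),
\]
whereas by definition $\sqrt n\,W_{K,n} = \sum_{k=1}^K \sum_{\tau_k < i \le n} (X_{k,i} - I_k)$. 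Subtracting, and using that $\tau_k \le \tau_K$ for $1 \le k \le K$ (the pivot arrival times are nondecreasing in~$k$), each inner difference of ranges collapses to a sum over $\tau_k < i \le \min(n, \tau_K)$, giving
\[
  \sqrt n\,(W_{K,n} - T_{K,n}) = \sum_{k=1}^K \sum_{\tau_k < i \le \min(n,\tau_K)} (X_{k,i} - I_k).
\]

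Next I would note that $\tau_K < \infty$ almost surely, so for all $n \ge \tau_K$ the right-hand side equals the $n$-free random variable $A_K := \sum_{k=1}^K \sum_{\tau_k < i \le \tau_K} (X_{k,i} - I_k)$, and this $A_K$ is almost surely finite: it is a sum of the $K$ inner sums over the finitely many indices $i \in (\tau_k, \tau_K]$, each term $X_{k,i}$ is finite since $\beta$ is $[0,\infty)$-valued, and each $I_k$ is finite almost surely since $\E I_k < \infty$ by hypothesis. Therefore $|W_{K,n} - T_{K,n}| = |A_K|/\sqrt n \to 0$ almost surely as $n \to \infty$. There is essentially no obstacle beyond the bookkeeping of the summation ranges together with the elementary fact that $\tau_1 \le \tau_2 \le \cdots$; it is precisely this monotonicity that makes the two double sums differ only over the bounded index range $(\tau_k, \tau_K]$, rather than over an unbounded tail that would survive the $n^{-1/2}$ scaling.
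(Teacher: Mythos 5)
Your proposal is correct and follows essentially the same route as the paper: for $n \ge \tau_K$ the difference $\sqrt{n}\,(W_{K,n}-T_{K,n})$ reduces to the fixed, almost surely finite sum $\sum_{k=1}^K \sum_{\tau_k < i \le \tau_K} (X_{k,i}-I_k)$ (finitely many terms since $\tau_K < \infty$ a.s., each finite since $\E I_k < \infty$), which vanishes after division by $\sqrt{n}$. Your extra bookkeeping with $\min(n,\tau_K)$ and the monotonicity of the $\tau_k$ just makes explicit what the paper states directly.
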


\begin{Remark}
\emph{
  The condition $\E I_k < \infty$ in~\refL{l:small_k} is weaker than the condition $\E I_{2,k} < \infty$ in~\refP{p:approx_limit}.
}
\end{Remark}

\begin{proof}[Proof of \refL{l:small_k}]
When $n > \tau_K$ we have
  \begin{align*}
    \abs{T_{K,n}-W_{K,n}} &= \frac{1}{\sqrt{n}}\abs{\sum_{k=1}^K \sum_{\tau_k < i \leq \tau_K} (X_{k,i} - I_k)} \\
    &\leq \frac{1}{\sqrt{n}} \sum_{k=1}^K \sum_{\tau_k < i \leq \tau_K} \abs{X_{k,i} - I_k}.
  \end{align*}
  For a fixed $K$ with $k\leq K$, the almost sure finiteness of $\tau_k$ and $\tau_K$ implies that the sum 
  \begin{equation}
    \sum_{k=1}^K \sum_{\tau_k < i \leq \tau_K} \abs{X_{k,i} - I_k},
    \label{eq:small_k_sum}
  \end{equation}
  consists of an almost surely finite number of terms. 
  Since each term $\abs{X_{k,i} - I_k}$ is finite almost surely, the sum in \eqref{eq:small_k_sum} is finite almost surely.
  Therefore, $\abs{T_{K,n}-W_{K,n}}\rightarrow 0$ almost surely as $n\rightarrow\infty$.
\end{proof}

\begin{lemma}\label{l:large_k}
  Let
  \begin{equation*}
    \epsilon_K := \sum_{k=K+1}^\infty \left(\E I_{2,k}\right)^{1/2}.
  \end{equation*}
  Then 
  \begin{equation*}
     \norm{\overline{W}_{K,n}}_2 \leq \epsilon_K.
  \end{equation*}
\end{lemma}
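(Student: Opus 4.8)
The plan is to expand $\overline{W}_{K,n}$ as a sum over pivots and control each summand's $L^2$ norm separately. Write $A_{k,n} := n^{-1/2} \sum_{\tau_k < i \leq n} (X_{k,i} - I_k)$, so that $\overline{W}_{K,n} = \sum_{k = K+1}^{\infty} A_{k,n}$. Because the pivot arrival times are strictly increasing, $\tau_k \geq k$, and hence for fixed~$n$ the inner sum over~$i$ is empty (so $A_{k,n} = 0$) as soon as $k \geq n$; thus $\overline{W}_{K,n}$ is really a \emph{finite} sum over~$k$, and ordinary Minkowski's inequality gives $\norm{\overline{W}_{K,n}}_2 \leq \sum_{k = K+1}^{\infty} \norm{A_{k,n}}_2$. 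It therefore suffices to prove, for every~$k$ and every~$n$, that $\norm{A_{k,n}}_2 \leq (\E I_{2,k})^{1/2}$.

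For this I would condition on $C_k$ [see~\eqref{Ck}]. By the crucial observation recalled in \refS{S:qsel_processes} (in the present pivot-indexed notation), conditionally given $C_k$ the seeds $U_{\tau_k + 1}, U_{\tau_k + 2}, \dots$ are iid uniform$(0,1)$; hence, by~\eqref{Xki}, the variables $(X_{k,i})_{i > \tau_k}$ are, conditionally given $C_k$, iid with conditional mean $I_k$ [by~\eqref{eq:I_k}] and conditional second moment $I_{2,k}$ [by~\eqref{eq:I_2kl} with $\ell = k$], so their conditional variance is $I_{2,k} - I_k^2 \leq I_{2,k}$. Since the cross terms then vanish under this conditional expectation,
\[
  \E\Bigl[ \Bigl( \sum_{\tau_k < i \leq n} (X_{k,i} - I_k) \Bigr)^2 \,\Big|\, C_k \Bigr] = (n - \tau_k)^+ \bigl( I_{2,k} - I_k^2 \bigr) \leq n\, I_{2,k}.
\]
Dividing by~$n$ and taking expectations yields $\norm{A_{k,n}}_2^2 \leq \E I_{2,k}$, as claimed; summing over $k > K$ then gives $\norm{\overline{W}_{K,n}}_2 \leq \sum_{k = K+1}^{\infty} (\E I_{2,k})^{1/2} = \epsilon_K$. (If $\epsilon_K = \infty$ there is nothing to prove, and the displayed bounds hold verbatim with both sides possibly infinite.)

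I do not expect a genuine obstacle here: this is the same pivot-by-pivot variance estimate that underlies \refL{l:sigma_limit}. The two points that need a little care are (a) recording that, for fixed~$n$, $\overline{W}_{K,n}$ is a finite sum over~$k$, so that no $L^2$-completeness argument is needed in order to split it and apply Minkowski's inequality; and (b) the conditional independence of the family $(X_{k,i})_{i > \tau_k}$ given $C_k$, which is exactly what makes the cross terms drop out and is inherited from the ``crucial observation'' that the seeds arriving after $\tau_k$ are conditionally iid uniform.
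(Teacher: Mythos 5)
Your proof is correct and follows essentially the same route as the paper: Minkowski's inequality to pull the $L^2$-norm inside the sum over pivots, then conditioning on $C_k$ so that the $(X_{k,i}-I_k)_{i>\tau_k}$ are conditionally iid and centered, giving the bound $(n-\tau_k)^+(I_{2,k}-I_k^2)\leq n\,\E I_{2,k}$ after taking expectations. Your extra observation that the sum over $k$ is finite for fixed $n$ (since $\tau_k \geq k$) is a harmless refinement of the same argument, not a different approach.
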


\begin{Remark}
{\em
  A necessary and sufficient condition for $\epsilon_K \rightarrow 0$ as $K\rightarrow\infty$ 
  is~\eqref{eq:sigma_infty_assumption}.
Therefore, by Remark~\ref{r:tameness}, $\epsilon$-tameness for some $\epsilon < 1/2$ is sufficient. 
}
\end{Remark}

\begin{proof}[Proof of \refL{l:large_k}]
  Minkowski's inequality yields
  \begin{equation}
    \norm{\overline{W}_{K,n}}_2 \leq \frac{1}{\sqrt{n}}\sum_{k=K+1}^\infty \Big\|\sum_{\tau_k < i \leq n} (X_{k,i} - I_k)\Big\|_2.
    \label{eq:W_triangle}
  \end{equation}
  By conditioning on $C_k$, we can calculate the square of the $L^2$-norm 
  here:
  \begin{align}\label{eq:W_var_bound}
  \Big\|\sum_{\tau_k < i \leq n} (X_{k,i} - I_k)\Big\|_2^2 
  &= \E\,\E\Big[ \Big(\sum_{\tau_k < i \leq n}(X_{k,i} - I_k)\Big)^2\Big| C_k\Big] \nonumber \\
    &= \E\left\{ (n-\tau_k)^+ (I_{2,k} - I_k^2)\right\} \nonumber\\
    &\leq n \E I_{2,k},
  \end{align}
  where we use the fact that, conditionally given $C_k$, the random variables $X_{k,i}-I_k$ for $i > \tau_k$ are iid with zero mean. Substituting \eqref{eq:W_var_bound} into \eqref{eq:W_triangle} gives the result.
\end{proof}

\begin{proposition}\label{p:W_n_limit}
  Suppose that
  \begin{equation*}
    \sum_{k=1}^\infty \left(\E I_{2,k}\right)^{1/2} < \infty.
  \end{equation*}
  Then
  \begin{equation*}
    W_n\,\Lto\,\sigma_\infty Z,
  \end{equation*}
  where $Z$ has a standard normal distribution independent of $\sigma_\infty$.
\end{proposition}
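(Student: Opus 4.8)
The plan is to run the standard ``$\delta/3$'' comparison argument that converts the two one-parameter limits already in hand into the desired double limit, using the uniform-in-$n$ control of the tail $\overline{W}_{K,n}$ from \refL{l:large_k} as the glue, together with the fact (noted in the text) that $W_n$ does not depend on~$K$.

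First I would fix $K$ and let $n\to\infty$. Since $\sum_k(\E I_{2,k})^{1/2}<\infty$, each $\E I_{2,k}$ is finite, so \refP{p:approx_limit} gives $T_{K,n}\Lto\sigma_K Z$; and since $I_k\le I_{2,k}^{1/2}$ pointwise (conditional Jensen), $\E I_k\le(\E I_{2,k})^{1/2}<\infty$, so \refL{l:small_k} gives $\abs{T_{K,n}-W_{K,n}}\to 0$ almost surely, hence in probability. By Slutsky's theorem (the converging-together lemma), $W_{K,n}\Lto\sigma_K Z$ as $n\to\infty$, for each fixed~$K$.

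Next I would test against an arbitrary bounded Lipschitz $f$ with constant $\norm{f}_{\mathrm{Lip}}$ and, realizing a single standard normal $Z$ independent of the whole family $(\sigma_K)_K$ and of $\sigma_\infty$ (which changes none of the distributions involved), estimate $\abs{\E f(W_n)-\E f(\sigma_\infty Z)}$ by the three-term bound
\[
\abs{\E f(W_n)-\E f(W_{K,n})}+\abs{\E f(W_{K,n})-\E f(\sigma_K Z)}+\abs{\E f(\sigma_K Z)-\E f(\sigma_\infty Z)}.
\]
The first term is at most $\norm{f}_{\mathrm{Lip}}\,\E\abs{W_n-W_{K,n}}=\norm{f}_{\mathrm{Lip}}\,\E\abs{\overline{W}_{K,n}}\le\norm{f}_{\mathrm{Lip}}\,\norm{\overline{W}_{K,n}}_2\le\norm{f}_{\mathrm{Lip}}\,\epsilon_K$ by \refL{l:large_k}, crucially uniformly in~$n$; the third term is at most $\norm{f}_{\mathrm{Lip}}\,\E\abs{\sigma_K Z-\sigma_\infty Z}=\norm{f}_{\mathrm{Lip}}\,\E\abs{Z}\,\E\abs{\sigma_K-\sigma_\infty}\to 0$ as $K\to\infty$ by conclusion~(iii) of \refL{l:sigma_limit} (using $\E\abs{Z}=\sqrt{2/\pi}<\infty$ and the independence coupling). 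Given $\delta>0$, first choose $K$ so large that the first and third terms are each below $\delta/3$ (possible since $\epsilon_K\to 0$ under the hypothesis and $\sigma_K\LOneTo\sigma_\infty$), then with this $K$ fixed choose $n$ large so the middle term is below $\delta/3$ by the previous paragraph. Hence $\limsup_n\abs{\E f(W_n)-\E f(\sigma_\infty Z)}\le\delta$ for every $\delta>0$, and since $f$ was an arbitrary bounded Lipschitz function this yields $W_n\Lto\sigma_\infty Z$.

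The only genuine obstacle is the interchange of the limits $n\to\infty$ and $K\to\infty$, and it is exactly the $n$-free bound $\norm{\overline{W}_{K,n}}_2\le\epsilon_K$ of \refL{l:large_k} that licenses it; everything else is a routine assembly of \refP{p:approx_limit}, \refL{l:small_k}, and \refL{l:sigma_limit} through Slutsky's theorem and the bounded-Lipschitz characterization of convergence in law.
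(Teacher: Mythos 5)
Your proposal is correct, and it rests on exactly the same skeleton as the paper's proof: the decomposition $W_n = W_{K,n} + \overline{W}_{K,n}$, the combination of \refP{p:approx_limit} with \refL{l:small_k} to get $W_{K,n}\,\Lto\,\sigma_K Z$ for fixed $K$, the uniform-in-$n$ bound $\norm{\overline{W}_{K,n}}_2 \leq \epsilon_K$ from \refL{l:large_k} to license the interchange of the limits in $n$ and $K$, and conclusion (iii) of \refL{l:sigma_limit} to pass from $\sigma_K$ to $\sigma_\infty$. The only real difference is the finishing device: the paper sandwiches distribution functions, controlling $\Prob[\abs{W_n - W_{K,n}} > \delta]$ by Markov's inequality, letting $\delta \to 0$, and finally invoking continuity of the law of $\sigma_\infty Z$ to match the limsup and liminf bounds; you instead test against bounded Lipschitz functions and estimate everything in $L^1$, via $\E\abs{\overline{W}_{K,n}} \leq \epsilon_K$ and $\E\abs{\sigma_K Z - \sigma_\infty Z} = \E\abs{Z}\,\E\abs{\sigma_K - \sigma_\infty}$, which characterizes convergence in law without any continuity assumption on the limit distribution. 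Your route is thus marginally more robust at the last step (no discussion of possible atoms of $\sigma_\infty Z$ is needed), at the mild cost of invoking the bounded-Lipschitz characterization rather than only elementary CDF manipulations; your remark that $I_k \leq I_{2,k}^{1/2}$ pointwise, so $\E I_k \leq (\E I_{2,k})^{1/2} < \infty$ and the hypothesis of \refL{l:small_k} follows from the stated assumption, correctly fills in a detail the paper leaves implicit.
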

\begin{proof}
  Let $t\in\R$ and $\delta> 0$. Since $W_n \leq t$ implies either 
	\[
	W_{K,n} \leq t + \delta \quad\text{ or }\quad \abs{W_n - W_{K,n}} > \delta,
	\]
	we have
  \begin{equation}
  \label{eq:breakup}
    \Prob[W_n\leq t] \leq \Prob[W_{K,n} \leq t + \delta] + \Prob[\abs{W_n - W_{K,n}} > \delta].
  \end{equation}
  Markov's inequality and \refL{l:large_k} imply
  \begin{equation}
  \label{eq:Markov}
    \Prob[\abs{W_n - W_{K,n}} > \delta] \leq \frac{\epsilon^2_K}{\delta^2}.
  \end{equation}
  Taking limits superior as $n\rightarrow\infty$ gives
\begin{align*}
\limsup_{n\to\infty} \Prob[W_n \leq t] 
&\leq \limsup_{n\to\infty} \Prob[W_{K, n} \leq  t + \delta] + \frac{\epsilon_K^2}{\delta^2} \\
&\leq \limsup_{n\to\infty} \Prob[T_{K, n} \leq  t + 2 \delta] + \frac{\epsilon_K^2}{\delta^2} \\
&= \Prob[\sigma_K Z \leq t + 2 \delta] + \frac{\epsilon_K^2}{\delta^2},
\end{align*}  
by~\eqref{eq:breakup}--\eqref{eq:Markov}, \refL{l:small_k}, and \refP{p:approx_limit}, respectively. 
Now taking limits as $K\rightarrow\infty$ gives
$$
    \limsup_{n\to\infty} \Prob[W_n \leq t] \leq \Prob[\sigma_\infty Z \leq t + 2 \delta]
$$
by \refL{l:sigma_limit} and the assumption that $\epsilon_K\to 0$. Letting $\delta\rightarrow 0$ yields
  \begin{equation}\label{eq:limsup}
    \limsup_{n\to\infty}\Prob[W_n \leq t] \leq \Prob[\sigma_\infty Z \leq t].
  \end{equation}
  Applying the previous argument with limsup replaced by liminf to
  \begin{equation*}
    \Prob[W_n \leq t] \geq \Prob[W_{K,n} \leq t - \delta] - \Prob[\abs{W_n - W_{K,n}} \geq \delta]
  \end{equation*}
  implies
  \begin{equation}
    \liminf_{n\to\infty}\Prob[W_n \leq t] \geq \Prob[\sigma_\infty Z < t].
    \label{eq:liminf}
  \end{equation}
  Since $\sigma_{\infty} Z$ has a continuous distribution, combining \eqref{eq:limsup} and \eqref{eq:liminf} gives the result.
\end{proof}

For completeness we include the following simple lemma, which will be needed in the sequel.
\begin{lemma}
  Let $0 < p < 1$ and $a_1, \ldots, a_n$ be nonnegative real numbers. Then
  \begin{equation*}
    \left(\sum_{k=1}^n a_k\right)^p \leq \sum_{k=1}^n a_k^p.
  \end{equation*}
  \label{l:calculus}
\end{lemma}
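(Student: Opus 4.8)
The plan is to reduce everything to the elementary fact that, on the unit interval, the map $x \mapsto x^p$ dominates the identity when $0 < p < 1$; no induction is actually needed. First I would dispose of the degenerate case: if $\sum_{k=1}^n a_k = 0$, then every $a_k = 0$ and both sides vanish (reading $0^p$ as $0$), so the inequality holds trivially. Henceforth set $t := \sum_{k=1}^n a_k > 0$ and pass to the normalized quantities $a_k / t \in [0, 1]$.

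The one analytic input is that for $x \in [0, 1]$ and $0 < p < 1$ we have $x^p \geq x$: indeed, the case $x = 0$ is immediate, and for $x > 0$ we write $x^p = x \cdot x^{p - 1}$ with $x^{p - 1} \geq 1$ since the exponent $p - 1$ is negative and $0 < x \leq 1$. Applying this with $x = a_k / t$ for each $k$ and summing gives $\sum_{k=1}^n (a_k/t)^p \geq \sum_{k=1}^n (a_k / t) = 1$. Multiplying through by $t^p > 0$ and using $t^p (a_k/t)^p = a_k^p$ then yields $\sum_{k=1}^n a_k^p \geq t^p = \left( \sum_{k=1}^n a_k \right)^p$, which is the assertion.

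There is essentially no obstacle here; the only points deserving a word of care are the convention $0^p = 0$ in the all-zero case and the justification of $x^p \geq x$ on $[0, 1]$. As an alternative one could instead induct on $n$, reducing to the two-term subadditivity inequality $(a + b)^p \leq a^p + b^p$ (which itself follows from the same normalization applied with $n = 2$), but the argument above handles all $n$ simultaneously and is shorter, so that is the route I would take.
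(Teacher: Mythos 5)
Your proof is correct and complete. Note that the paper itself states this lemma without proof (it is included ``for completeness'' as a standard calculus fact), so there is no argument in the paper to compare against; your normalization route --- reducing to $x^p \geq x$ for $x \in [0,1]$ via $t := \sum_{k=1}^n a_k$, summing, and scaling back by $t^p$ --- is a clean and standard way to fill that gap, and it handles all $n$ at once rather than inducting through the two-term subadditivity $(a+b)^p \leq a^p + b^p$. One trivial remark: for $p > 0$ the value $0^p = 0$ is not a convention but the actual value of the power, so the degenerate case needs no special reading; otherwise every step checks out.
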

The final step in the proof of \refT{t:quickval_residual_limit} is to show that the difference between the centering random variable
\[
\sum_{k=1}^\infty (n-\tau_k)^+ I_k
\]
in $W_n$ and the more natural
\[
n S = \sum_{k=1}^\infty n I_k
\]
is negligible (when scaled by $1 / \sqrt{n}$) in the limit as $n\rightarrow\infty$. 
\begin{proposition}\label{p:n_tau_k_limit}
  If the source-specific cost function $\beta$ is $\epsilon$-tame with $\epsilon < 1/2$, then

  \begin{equation*}
    \frac{1}{\sqrt{n}}\sum_{k=1}^\infty\left[n-(n-\tau_k)^+\right]I_k \rightarrow 0
  \end{equation*}
  almost surely as $n\rightarrow\infty$.
\end{proposition}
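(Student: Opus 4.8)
The plan is to reduce the assertion to the almost-sure finiteness of a single series and then to establish that finiteness by a conditioning argument exploiting the negative dependence between the arrival time $\tau_k$ of the $k$th pivot and the length $\Delta_{k-1} := R_{k-1} - L_{k-1}$ of the interval attached to it.  Since $n - (n-\tau_k)^+ = n\wedge\tau_k \le \sqrt{n\,\tau_k}$,
\[
  0 \le \frac{1}{\sqrt{n}}\sum_{k=1}^{\infty}\bigl[n - (n-\tau_k)^+\bigr]I_k
  = \sum_{k=1}^{\infty}\frac{n\wedge\tau_k}{\sqrt{n}}\,I_k
  \le \sum_{k=1}^{\infty}\sqrt{\tau_k}\,I_k .
\]
For each fixed $k$ we have $\tau_k < \infty$ and $I_k < \infty$ almost surely, so $(n\wedge\tau_k)I_k/\sqrt{n}\to0$ as $n\to\infty$; hence by dominated convergence (applied to the counting measure on $\{1,2,\dots\}$) it suffices to show $\sum_{k\ge1}\sqrt{\tau_k}\,I_k < \infty$ almost surely, and for this, by monotone convergence, it is enough to show $\sum_{k\ge1}\E\bigl[\sqrt{\tau_k}\,I_k\bigr] < \infty$.

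By $(c,\epsilon)$-tameness (and symmetry) of $\beta$, with $\epsilon < 1/2$, one has the deterministic bound $I_k = \int_{L_{k-1}}^{R_{k-1}}\beta(u,U_{\tau_k})\,du \le \tfrac{2c}{1-\epsilon}\,\Delta_{k-1}^{\,1-\epsilon}$, so it suffices to control $\E\bigl[\sqrt{\tau_k}\,\Delta_{k-1}^{\,1-\epsilon}\bigr]$.  Putting $\tau_0 := 0$ and using subadditivity of $t\mapsto\sqrt{t}$,
\[
  \sqrt{\tau_k} = \Bigl(\sum_{j=1}^{k}(\tau_j-\tau_{j-1})\Bigr)^{1/2} \le \sum_{j=1}^{k}\sqrt{\tau_j-\tau_{j-1}} ,
\]
so it is enough to bound $\E\bigl[\sqrt{\tau_j-\tau_{j-1}}\,\Delta_{k-1}^{\,1-\epsilon}\bigr]$ for each pair $1\le j\le k$ by a quantity that is summable over such pairs.

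I would do this by conditioning on $\widehat{C}_{j-1}$.  Given $\widehat{C}_{j-1}$, the seeds $U_i$ with $i > \tau_{j-1}$ are i.i.d.\ uniform$(0,1)$, and from this: (i)~$\tau_j-\tau_{j-1}$ is geometric with success probability $\Delta_{j-1}$, so Jensen's inequality gives $\E[\sqrt{\tau_j-\tau_{j-1}}\mid\widehat{C}_{j-1}]\le\Delta_{j-1}^{-1/2}$; (ii)~by the elementary rejection-sampling fact that the number of trials until the first success is independent of the accepted value, $\tau_j-\tau_{j-1}$ is conditionally independent of $U_{\tau_j}$ and of the seeds arriving after $\tau_j$, hence of $\Delta_{k-1}$ for every $k > j$ (since $\Delta_{k-1}$ is a function of $\widehat{C}_{j-1}$, of $\alpha$, and of $U_{\tau_j},U_{\tau_j+1},\dots$); and (iii)~writing $\Delta_{k-1}=\Delta_{j-1}\prod_{\ell=j}^{k-1}\rho_\ell$ with $\rho_\ell := \Delta_\ell/\Delta_{\ell-1}\in(0,1)$, and noting that given $\widehat{C}_{\ell-1}$ the relative position $V$ of $U_{\tau_\ell}$ in $(L_{\ell-1},R_{\ell-1})$ is uniform$(0,1)$ with $\rho_\ell = 1-V$ or $V$ according as $U_{\tau_\ell}<\alpha$ or $U_{\tau_\ell}>\alpha$, a one-line computation gives
\[
  \E\bigl[\rho_\ell^{\,1-\epsilon}\bigm|\widehat{C}_{\ell-1}\bigr]
  \le \gamma := \sup_{0\le v\le1}\Bigl[\int_0^v(1-w)^{1-\epsilon}\,dw + \int_v^1 w^{1-\epsilon}\,dw\Bigr]
  = \frac{2-2^{\epsilon-1}}{2-\epsilon} < 1,
\]
the supremum being attained at $v=1/2$ and the strict inequality holding because $\epsilon<1$.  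By the tower property, (iii) yields $\E[\Delta_{k-1}^{\,1-\epsilon}\mid\widehat{C}_{j-1}]\le\Delta_{j-1}^{\,1-\epsilon}\gamma^{\,k-j}$, and combining this with (i) and (ii) gives $\E[\sqrt{\tau_j-\tau_{j-1}}\,\Delta_{k-1}^{\,1-\epsilon}]\le\gamma^{\,k-j}\,\E[\Delta_{j-1}^{\,1/2-\epsilon}]$.

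To finish, the same one-line computation with exponent $1$ gives $\E[\rho_\ell\mid\widehat{C}_{\ell-1}]\le3/4$, whence $\E\Delta_{j-1}\le(3/4)^{\,j-1}$; since $\epsilon<1/2$ the exponent $1/2-\epsilon$ lies in $(0,1)$, so Jensen gives $\E[\Delta_{j-1}^{\,1/2-\epsilon}]\le(3/4)^{(j-1)(1/2-\epsilon)}$.  Summing the resulting doubly geometric bound first over $j\le k$ and then over $k$,
\[
  \sum_{k=1}^{\infty}\E\bigl[\sqrt{\tau_k}\,I_k\bigr]
  \le \frac{2c}{1-\epsilon}\sum_{k=1}^{\infty}\sum_{j=1}^{k}\gamma^{\,k-j}\,(3/4)^{(j-1)(1/2-\epsilon)}
  = \frac{2c}{(1-\epsilon)(1-\gamma)\bigl(1-(3/4)^{1/2-\epsilon}\bigr)} < \infty ,
\]
which would complete the proof.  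I expect the main obstacle to be exactly this estimation of $\E[\sqrt{\tau_k}\,I_k]$: since $\E\tau_k = \infty$ for $k\ge2$, the crude Cauchy--Schwarz split $\E[\sqrt{\tau_k}\,I_k]\le(\E\tau_k)^{1/2}(\E I_k^2)^{1/2}$ is worthless, so one must use the conditional independence of~(ii) together with the geometric shrinkage of~(iii) to absorb the factor $\Delta_{j-1}^{-1/2}$ produced by the waiting time into the factor $\Delta_{j-1}^{\,1-\epsilon}$ produced by the cost.  This is precisely where the hypothesis $\epsilon<1/2$ enters: it makes the leftover exponent $1/2-\epsilon$ strictly positive, so that $\E[\Delta_{j-1}^{\,1/2-\epsilon}]$ is geometrically summable in $j$.
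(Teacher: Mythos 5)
Your proposal is correct, and while it follows the paper's general skeleton---write $n-(n-\tau_k)^+=n\wedge\tau_k$, dominate $I_k$ via tameness by a constant times $(R_{k-1}-L_{k-1})^{1-\epsilon}$, and reduce the claim to summability in expectation of (a power of $\tau_k$) times $I_k$---the execution differs from the paper's in several genuine ways. You work at the borderline exponent $1/2$, using $n\wedge\tau_k\le\sqrt{n\tau_k}$ and dominated convergence with respect to counting measure once $\sum_k\sqrt{\tau_k}\,I_k<\infty$ a.s.\ is known, whereas the paper takes an auxiliary $\delta\in(0,\tfrac12-\epsilon)$ and the bound $n\wedge\tau_k\le\tau_k^{(1/2)+\delta}n^{(1/2)-\delta}$, which costs a free parameter but yields an explicit almost-sure rate $O(n^{-\delta})$ that your route does not give. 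More substantially, you split $\sqrt{\tau_k}\le\sum_{j\le k}\sqrt{\tau_j-\tau_{j-1}}$ and exploit the conditional independence, given $\widehat{C}_{j-1}$, of each geometric increment from the later interval lengths, pairing the factor $\Delta_{j-1}^{-1/2}$ from the waiting time with $\Delta_{j-1}^{1-\epsilon}\gamma^{k-j}$ from the cost and the subsequent shrinkage; the paper instead stochastically dominates $\tau_k$ by a sum of $k$ i.i.d.\ geometrics all having the worst success probability $R_{k-1}-L_{k-1}$ and pairs $\Delta_{k-1}^{-(1/2+\delta)}$ directly with $\Delta_{k-1}^{1-\epsilon}$. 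Finally, you handle the fractional moment of a geometric by Jensen and re-derive the interval-shrinkage contraction in conditional form (your $\gamma=(2-2^{\epsilon-1})/(2-\epsilon)$ is precisely the base of \cite[Lemma~3.5]{fn2013} with exponent $1-\epsilon$), which lets you bypass the polylogarithm estimates from \cite{f1999} as well as the paper's separate treatment of the $k=1,2$ terms, since your doubly geometric sum over $j\le k$ converges uniformly in $k$. The net trade-off: your argument is more elementary and self-contained, while the paper's version is slightly heavier but produces a rate and a template that is reused (with $\delta=0$) in the moment bounds of Lemma~\ref{l:QRMWhat}.
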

\begin{proof}
  Observe that for any $0 < \delta < 1/2$, we have
  \begin{equation*}
    [n-(n-\tau_k)^+] = \min(n,\tau_k) \leq \tau_k^{(1/2) + \delta}n^{(1/2) - \delta}.
  \end{equation*}
  Therefore, if we let $0 < \delta < (1/2) - \epsilon$, it suffices to show that
  \begin{equation}\label{eq:tau_sum}
    \sum_{k=1}^\infty \tau_k^{(1/2)+\delta}I_k < \infty
  \end{equation}
  almost surely. We prove this by showing that the random variable in \eqref{eq:tau_sum} has finite expectation.
  Applying \cite[Lemma~3.2]{fn2013} implies that for the $\epsilon$-tameness constant $c$, we have
  \begin{equation*}
    I_k \leq \frac{2^\epsilon c}{1-\epsilon}(R_{k-1} - L_{k-1})^{1-\epsilon}.
  \end{equation*}
  Define, for $k=1,2,\ldots$, the sigma-field $\mathcal{F}_k:= \sigma\langle(L_1,R_1),\ldots (L_{k-1}, R_{k-1})\rangle$.
  Conditionally given $\mathcal{F}_k$, the distribution of $\tau_k$ is the convolution over $j=0,\ldots, k-1$ of geometric distributions with success probabilities $R_j - L_j$. 
  This distribution is stochastically smaller than the convolution of $k$ geometric distributions with success probability $R_{k-1}-L_{k-1}$. 
  Let $G_k, G_{k,0}, \ldots, G_{k,k-1}$ be $k + 1$ iid geometric random variables with success probability $R_{k-1}-L_{k-1}$. 
  Then
  \begin{align}
    \E\left[\left. \tau_k^{(1/2)+\delta}I_k \right| \mathcal{F}_k\right] & \leq
    C_1 \E\left[\left.\left(\sum_{i=0}^{k-1}G_{k,i}\right)^{(1/2)+\delta}(R_{k-1}-L_{k-1})^{1-\epsilon}\right|L_{k-1},R_{k-1}\right] \nonumber \\
    &\leq C_1 (R_{k-1}-L_{k-1})^{1-\epsilon}\,\E\left[\left. \sum_{i=0}^{k-1} G_{k,i}^{(1/2) + \delta}\right|L_{k-1},R_{k-1}\right] \nonumber \\
    &\leq C_1 k(R_{k-1}-L_{k-1})^{1-\epsilon}\,\E\left[\left. G_k^{(1/2) + \delta}\right|L_{k-1},R_{k-1}\right], \label{eq:conditional_tau_I}
 \end{align}
 where
 \begin{equation*}
   C_1 := \frac{2^\epsilon c }{1-\epsilon}.
 \end{equation*}
We can now compute
 \begin{equation}\label{eq:polylog}
   \E\left[ \left. G_k^{(1/2) + \delta} \right| L_{k-1}, R_{k-1}\right] = \sum_{i=1}^\infty z^{i-1}(1-z)i^p,
 \end{equation}
 where $z=1-(R_{k-1}-L_{k-1})\in[0,1)$ for $k\geq 2$ is the failure probability and $p=(1/2) + \delta$. Note that the infinite series in \eqref{eq:polylog} can be written in terms of a polylogarithm function, as follows:
   \begin{equation*}
     \sum_{i=1}^\infty z^{i-1}(1-z)i^p = z^{-1}(1-z)\Li_{-p,0}(z), \quad\quad
     \Li_{\alpha, r}(z): = \sum_{n=1}^\infty (\log i)^r \frac{z^i}{i^\alpha}.
   \end{equation*}
   Therefore \cite[Theorem~1]{f1999} implies the existence of an 
   $\eta\in(0,1)$ such that for $1-\eta < z < 1$, we have
   \begin{equation*}
     \sum_{i=1}^\infty z^i i^p \leq \Gamma(1+p)(1-z)^{-(1+p)}.
   \end{equation*}
   On $0 \leq z \leq 1-\eta$, the polylogarithm $\Li_{-p, 0}(z)$ is increasing and therefore we have the bound
   \begin{equation*}
    \Li_{-p,0}(z) \leq \sum_{i=1}^\infty (1-\eta)^i i^p =: C_{p,\eta}
   \end{equation*}
   Defining
   \begin{equation*}
     C_2 := \max(\Gamma(1+p), C_{p,\eta}),
   \end{equation*}
   for $z\in[0,1)$ we get
     \begin{equation}\label{eq:polylog_bound}
       \Li_{-p,0}(z) \leq C_2 (1-z)^{-(1+p)}.
     \end{equation}
     Substituting the bound from \eqref{eq:polylog_bound} in \eqref{eq:polylog} gives
     \begin{align*}
       \E\left[ G_k^p|L_{k-1}, R_{k-1}\right] &\leq C_2\frac{R_{k-1} - L_{k-1}}{1-(R_{k-1} - L_{k-1})} (R_{k-1} - L_{k-1})^{-(1+p)} \\
       &= C_2\sum_{j=0}^\infty (R_{k-1} -L_{k-1})^{j-p}.
     \end{align*}
    Therefore, after substituting $p=(1/2) + \delta$, an application of the monotone convergence theorem yields
    \begin{equation*}
      \E(\tau_k^{(1/2) + \delta}I_k) \leq C_3k \sum_{j=0}^\infty \E(R_{k-1} - L_{k-1})^{j+ (1/2) - \epsilon - \delta},
    \end{equation*}
    where $C_3:=C_1C_2$.
    Let $q:= (1 / 2) - \epsilon - \delta$; then by the restriction placed on $\delta$, we know $q > 0$. By
    \cite[Lemma~3.1]{fn2013}, we have
    \begin{equation*}
      \E(\tau_k^{(1/2) + \delta}I_k) \leq C_3 k \sum_{j=0}^\infty \left(\frac{2-2^{-(j+q)}}{j + q + 1}\right)^{k-1}.
    \end{equation*}
Therefore, after defining
\begin{equation*}
  \gamma_j := \frac{2-2^{-(j+q)}}{j + q + 1},
\end{equation*}
we have
\[
  \sum_{k=3}^\infty \E(\tau_k^{(1/2) + \delta}I_k) \leq C_3 \sum_{k=3}^\infty k \sum_{j=0}^\infty \gamma_j^{k-1}
  =C_3 \sum_{j=0}^\infty \sum_{k=3}^\infty k \gamma_j^{k-1}
  \leq 3C_3 \sum_{j=0}^\infty \frac{\gamma_j^2}{(1-\gamma_j)^2}.
\]
Consequently, to check the convergence in \eqref{eq:tau_sum}, it suffices to check that 
$
\sum_{j=0}^\infty \gamma_j^2 < \infty
$;
however, this follows trivially from the observation that
$
\gamma_j^2 \leq 4 / j^2
$.
Therefore, it remains to show that the $k = 1$ and $k = 2$ terms in \eqref{eq:tau_sum} have finite expectation. The first arrival time $\tau_1$ equals~$1$ identically and $\E I_1 < \infty$. Applying \eqref{eq:conditional_tau_I} when $k=2$ gives
\begin{equation*}\label{eq:tau_k2}
\E\left[ \left. \tau_2^{(1/2)+\delta}I_2 \right| \mathcal{F}_2\right] \leq 2C_1(R_1-L_1)^{1-\epsilon}\,
\E\left[\left.G_2^{(1/2)+\delta}\right|L_1,R_1\right].
\end{equation*}
Since $(R_1-L_1)^{1-\epsilon} < 1$ a.s.\ , it suffices to show that
\begin{equation}\label{eq:G2}
\E G_2^p < \infty
\end{equation}
for $p=(1/2) + \delta$. However, we can calculate the expectation in \eqref{eq:G2} exactly. Since $R_1-L_1 \Leq 1-U$, where $U$ has a unif$(0,1)$ distribution, 
\[
\E G_2^p = \sum_{i=1}^\infty i^p \E[(1-U)U^{i-1}]\\
  = \sum_{i=1}^\infty \frac{i^p}{i(i+1)},
\]
which is finite because $p<1$.
\end{proof}

%%%%%%%%%%%%%%%%%%%%%%%%%%%%%%%%%%%%%%%%%%%%%%%%%%%%%%%%%%%%%%%%%%%%%%%%%%%%%%%%%%%%%%%%
\section{Convergence of moments for \quickval\ residual}\label{s:quickval_moments}
%%%%%%%%%%%%%%%%%%%%%%%%%%%%%%%%%%%%%%%%%%%%%%%%%%%%%%%%%%%%%%%%%%%%%%%%%%%%%%%%%%%%%%%%

The main result of this section is that, under suitable tameness assumptions for the cost function, the moments of the normalized \quickval\ residual converge to those of its limiting distribution.

\begin{theorem}\label{t:quickval_residual_moments}
Let $p \in [2, \infty)$.
Suppose that the cost function $\beta$ is $\epsilon$-tame with $\epsilon < 1 / p$.  
Then the moments of orders $\leq p$ for the normalized \quickval\ residual 
  \begin{equation*}
\sqrt{n} \left( \frac{S_n}{n} - S \right)
  \end{equation*}
converge to the corresponding moments of the limit-law random variable $\sigma_\infty Z$.
  \label{t:QRM}
\end{theorem}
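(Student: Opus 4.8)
The plan is to upgrade the convergence in law of \refT{t:quickval_residual_limit} to convergence of moments of all orders up to~$p$ by establishing uniform integrability of $\{|\sqrt{n}(S_n/n - S)|^p\}_{n \geq 1}$; since convergence in distribution plus uniform integrability of the $p$-th powers yields convergence of all moments of order $\leq p$, this suffices. Recalling the decomposition introduced in \refS{s:quickval_clt}, write
\[
\sqrt{n}\left(\frac{S_n}{n} - S\right) = W_n - \frac{1}{\sqrt n}\sum_{k=1}^\infty\left[n - (n-\tau_k)^+\right]I_k,
\]
so by Minkowski it is enough to bound the $L^p$-norms of each piece uniformly in~$n$. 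The second piece was already shown in \refP{p:n_tau_k_limit} to go to~$0$ almost surely; a quantitative version of that argument (the expectation bounds there, e.g.\ \eqref{eq:conditional_tau_I} and the polylogarithm estimate, carried out with $p$-th moments of the geometric arrival times in place of first moments, which remain finite since $\epsilon < 1/p$ forces the relevant exponents to stay in the convergence range) will control $\E\big|\frac{1}{\sqrt n}\sum_k[n-(n-\tau_k)^+]I_k\big|^p$ uniformly in~$n$. So the crux is a uniform $L^p$-bound on $W_n$.

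For $W_n$, I would again split $W_n = W_{K,n} + \overline W_{K,n}$. For the tail $\overline W_{K,n} = \frac{1}{\sqrt n}\sum_{k > K}\sum_{\tau_k < i \le n}(X_{k,i}-I_k)$, I would run the Burkholder--Rosenthal / von~Bahr--Esseen machinery: conditionally given $C_k$, the increments $X_{k,i}-I_k$ for $i > \tau_k$ are iid mean-zero, so a Rosenthal-type inequality gives $\big\|\sum_{\tau_k<i\le n}(X_{k,i}-I_k)\big\|_p \le C_p\big((n-\tau_k)^{+\,1/2}\|I_{2,k}^{1/2}\|_p + (n-\tau_k)^{+\,1/p}\|X_k\|_p\big)$ after conditioning (here $\|\cdot\|_p$ also integrates out $C_k$). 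Using $\epsilon$-tameness with $\epsilon<1/p$ together with \refL{Ink} (with $s=2$, and also $s=p$) one gets geometric-in-$k$ bounds on $\E I_{2,k}^{p/2}$ and $\E I_{p,k}$; dividing by $\sqrt n$ and summing over $k > K$ yields $\|\overline W_{K,n}\|_p \le \widetilde\epsilon_K$ with $\widetilde\epsilon_K \to 0$, uniformly in~$n$ (the $n^{1/p}/\sqrt n \to 0$ factor only helps). For the head $W_{K,n}$, I would compare it with $T_{K,n}$ exactly as in \refL{l:small_k}: the difference is $\frac{1}{\sqrt n}\sum_{k\le K}\sum_{\tau_k<i\le\tau_K}(X_{k,i}-I_k)$, a sum of an a.s.-finite number of a.s.-finite terms divided by $\sqrt n$, which is uniformly bounded in $L^p$ provided those finitely many terms have finite $p$-th moments (again guaranteed by $\epsilon$-tameness and \refL{Ink}); and $T_{K,n}$ itself, being $\widehat S_{K,n}$ minus its conditional mean over $\tau_K$-or-fewer... more precisely over $(n-\tau_K)^+$ iid summands, divided by $\sqrt n$, has uniformly bounded $p$-th moment by the same Rosenthal bound with $K$ fixed.

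Assembling: for every $\delta>0$ choose $K$ with $\widetilde\epsilon_K < \delta$; then $\sup_n \|W_n\|_p \le \sup_n\|W_{K,n}\|_p + \delta < \infty$, and combined with the \refP{p:n_tau_k_limit} bound we get $\sup_n \E\,|\sqrt n(S_n/n - S)|^p < \infty$. Interpolation then gives uniform integrability of $|\sqrt n(S_n/n-S)|^{p'}$ for every $p' < p$ (and a uniform bound at $p'=p$, hence convergence of moments of order $\le p$ by the standard ``convergence in law $+$ uniform integrability'' theorem, applied to each $p' \le p$). The main obstacle I anticipate is bookkeeping the two-term Rosenthal bound cleanly through the conditioning on $C_k$ and then through the sum over $k$: one must verify that both the ``Gaussian part'' (via $\E I_{2,k}^{p/2}$) and the ``heavy-tail part'' (via $\E I_{p,k}$, i.e.\ $\E I_{p,k}^1$ in the notation of \refL{Ink}) are geometrically summable in~$k$, which is precisely where the hypothesis $\epsilon < 1/p$ (rather than merely $\epsilon < 1/2$) is needed — $\epsilon<1/p$ keeps $s=p$, $r=1$ admissible in \refL{Ink}. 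A secondary nuisance is that $(n-\tau_k)^{+\,1/p}$ rather than $(n-\tau_k)^{+\,1/2}$ appears in the heavy-tail term, but since $1/p \le 1/2$ this only improves the $1/\sqrt n$ scaling, so it causes no real trouble.
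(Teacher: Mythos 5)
Your overall architecture is the same as the paper's: write $\sqrt{n}\,(S_n/n-S)=W_n-\widehat{W}_n$ with $\widehat{W}_n:=n^{-1/2}\sum_k[n-(n-\tau_k)^+]I_k$, bound $W_n$ by Minkowski over~$k$ plus Rosenthal's inequality applied conditionally given $C_k$ plus \refL{Ink}, and bound $\widehat{W}_n$ by a moment version of the \refP{p:n_tau_k_limit} argument (this is exactly what the paper does in \refL{l:QRMW} and \refL{l:QRMWhat}). However, your final step contains a genuine gap: a uniform bound $\sup_n\E\,|\sqrt{n}(S_n/n-S)|^p<\infty$ together with convergence in law gives convergence of moments only of orders \emph{strictly less than}~$p$; it does not give uniform integrability of the $p$th powers themselves, so your parenthetical claim ``a uniform bound at $p'=p$, hence convergence of moments of order $\le p$'' fails precisely at the top order~$p$, which the theorem asserts. (Take $X_n=n\,\truth(A_n)$ with $\Prob(A_n)=n^{-p}$: $X_n\to 0$ in law, $\sup_n\E|X_n|^p=1$, yet $\E|X_n|^p\not\to 0$.) The fix is available inside your own machinery and is exactly the paper's route: since $\epsilon<1/p$ is strict, choose $q\in(p,1/\epsilon)$ with $q\ge 2$ and run the Rosenthal/\refL{Ink} estimates at exponent~$q$ (this is where the admissibility $s=q$, $r=1$ in \refL{Ink} is used); $L^q$-boundedness for some $q>p$ plus convergence in law then yields convergence of all moments of order $\le p$ (the paper cites Chung, Theorem 4.5.2, for this).

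Two secondary points. First, your head/tail split of $W_n$ is unnecessary and the head is treated too loosely: in $\sum_{k\le K}\sum_{\tau_k<i\le\tau_K}(X_{k,i}-I_k)$ the number of summands is random (of order $K\tau_K$ with $\tau_K$ unbounded), so ``each term has finite $p$th moment'' does not by itself give an $L^p$ bound on the sum; but none of this is needed, since your tail argument (Rosenthal given $C_k$, $(n-\tau_k)^+\le n$, then geometric-in-$k$ bounds from \refL{Ink}) applies verbatim to \emph{all} $k\ge 1$, which is the paper's \refL{l:QRMW}. Second, for $\widehat{W}_n$ the condition $\epsilon<1/p$ (or $1/q$) is not what is needed: bounding $\min(n,\tau_k)\le n^{1/2}\tau_k^{1/2}$ reduces the task to $\sum_k\|\tau_k^{1/2}I_k\|_q<\infty$, which the paper establishes for every $q<\infty$ assuming only $\epsilon<1/2$ (\refL{l:QRMWhat}); the quantitative geometric-moment and polylogarithm estimates you sketch do go through, but the exponent bookkeeping enters through $(R_{k-1}-L_{k-1})^{j+q(1-\epsilon)-(q/2)}$, not through $\epsilon<1/p$.
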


\begin{Remark}
\label{r:QRM}
\emph{
We will prove \refT{t:QRM} using the second assertion in \cite[Theorem 4.5.2]{c2001}.  Use of the first assertion in that theorem shows that, for all real~$r \in [1, p]$, we also have convergence of $r$th absolute moments.
}
\end{Remark}

As mentioned in Remark~\ref{r:QRM}, we prove \refT{t:QRM} using \cite[Theorem 4.5.2]{c2001} by proving that, for some $q > p$, the $L^q$-norms of the normalized \quickval\ residuals are bounded 
in~$n$.  Choosing $q$ arbitrarily from the nonempty interval $[2, 1 / \epsilon)$ and using the triangle inequality for $L^q$-norm, we do this by showing (in Lemmas~\ref{l:QRMW} and~\ref{l:QRMWhat}, respectively) that the same $L^q$-boundedness holds for each of the following two sequences:
\begin{align*}
  W_n 
&= \frac{1}{\sqrt{n}} \left[ S_n - \sum_{k=1}^\infty (n-\tau_k)^+ I_k \right]  = \frac{1}{\sqrt{n}} \sum_{k = 1}^{\infty} \left[ S_{k, n} - (n-\tau_k)^+ I_k \right] \\
  &= \frac{1}{\sqrt{n}} \sum_{k = 1}^{\infty} \sum_{\tau_k < i \leq n} (X_{k,i} - I_k),
\end{align*}
and the sequence previously treated in \refP{p:n_tau_k_limit}:
\[ 
\widehat{W}_n := \frac{1}{\sqrt{n}}\sum_{k=1}^\infty\left[n-(n-\tau_k)^+\right]I_k.
\]

\begin{lemma}
\label{l:QRMW}
Let $q \in [2, \infty)$, and suppose that the cost function~$\beta$ is $\epsilon$-tame with 
$0 \leq \epsilon < 1 / q$.  Then the sequence $(W_n)$ is $L^q$-bounded.
\end{lemma}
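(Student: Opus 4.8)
The plan is to establish $\sup_n\lVert W_n\rVert_q<\infty$ by decomposing $W_n$ over the pivot index~$k$, estimating each inner (conditionally i.i.d., mean-zero) sum by a Marcinkiewicz--Zygmund-type moment inequality, and then summing a geometrically convergent scalar series; the tameness restriction $\epsilon<1/q$ enters only at the very last step.

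\emph{Decomposition and conditioning.}  From $W_n=\tfrac1{\sqrt n}\sum_{k\ge1}\sum_{\tau_k<i\le n}(X_{k,i}-I_k)$, Minkowski's inequality gives
\begin{equation*}
  \lVert W_n\rVert_q\le\frac1{\sqrt n}\sum_{k\ge1}\Bigl\lVert\sum_{\tau_k<i\le n}(X_{k,i}-I_k)\Bigr\rVert_q .
\end{equation*}
Fix~$k$ and condition on $C_k=(L_{k-1},R_{k-1},\tau_k,U_{\tau_k})$: as noted in the proof of \refL{l:large_k}, conditionally given $C_k$ the summands $X_{k,i}-I_k$ for $\tau_k<i\le n$ are i.i.d.\ with mean zero, and there are exactly $(n-\tau_k)^+\le n$ of them.

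\emph{The moment estimate.}  Write $X_k:={\bf 1}(L_{k-1}<U<R_{k-1})\beta(U,U_{\tau_k})$ for a fresh uniform~$U$ and $I_{q,k}:=\E[X_k^q\mid C_k]=\int_{L_{k-1}}^{R_{k-1}}\beta^q(u,U_{\tau_k})\,du$.  The Marcinkiewicz--Zygmund inequality together with the power-mean inequality (using $q\ge2$) gives, conditionally on $C_k$,
\begin{align*}
  \E\Bigl[\Bigl\lvert\sum_{\tau_k<i\le n}(X_{k,i}-I_k)\Bigr\rvert^q\Bigm|C_k\Bigr]
  &\le B_q\bigl((n-\tau_k)^+\bigr)^{q/2}\,\E\bigl[\lvert X_k-I_k\rvert^q\bigm|C_k\bigr]\\
  &\le 2^qB_q\,n^{q/2}\,I_{q,k},
\end{align*}
with $B_q$ an absolute constant; here I used $(n-\tau_k)^+\le n$, the bound $\lvert X_k-I_k\rvert^q\le2^{q-1}(X_k^q+I_k^q)$, and $I_k^q=(\E[X_k\mid C_k])^q\le\E[X_k^q\mid C_k]=I_{q,k}$ (conditional Jensen).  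Taking unconditional expectations, extracting $q$th roots, and recombining with the Minkowski bound, the power $n^{q/2}$ cancels the $n^{-1/2}$ and leaves the $n$-free bound $\lVert W_n\rVert_q\le2B_q^{1/q}\sum_{k\ge1}(\E I_{q,k})^{1/q}$.

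\emph{Summability and the main obstacle.}  It remains to show $\sum_{k\ge1}(\E I_{q,k})^{1/q}<\infty$.  Since $\epsilon<1/q$ forces $q\epsilon<1$, each $I_{q,k}$ is finite, and the $q$th-power analogue of \cite[Lemma~3.6]{fn2013} yields $I_{q,k}\le\frac{2^{q\epsilon}c^q}{1-q\epsilon}(R_{k-1}-L_{k-1})^{1-q\epsilon}$; then \cite[Lemma~3.5]{fn2013} makes $\E(R_{k-1}-L_{k-1})^{1-q\epsilon}$ decay geometrically in~$k$, exactly as in the proof of \refP{p:n_tau_k_limit}.  (Equivalently, this is the ``$q$ in place of $2$'' version of the estimate in \refR{r:tameness}, which used \refL{Ink} with $s=2$, $r=1$; here one takes $s=q$, $r=1$.)  Hence $\E I_{q,k}=O(\rho^{k})$ for some $\rho<1$ and the series converges.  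I expect the delicate part to be the moment estimate: one must track the $n$-dependence of the inner sum precisely enough that $\bigl((n-\tau_k)^+\bigr)^{q/2}$ is exactly absorbed by the $n^{-1/2}$ scaling, and one must reduce the whole quantity to the single scalar series $\sum_k(\E I_{q,k})^{1/q}$, whose convergence is then precisely what the hypothesis $\epsilon<1/q$ buys (the same exponent rendering each $I_{q,k}$ integrable and forcing geometric decay of its mean).
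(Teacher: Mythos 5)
Your proof is correct and follows essentially the same route as the paper's: Minkowski's inequality over the pivot index, a conditional moment inequality for the (conditionally i.i.d., mean-zero) inner sums given $C_k$, and geometric decay of $\E I_{q,k}$ from $\epsilon$-tameness with $q\epsilon<1$ (via the $q$th-power analogues of \cite[Lemmas~3.5--3.6]{fn2013}, equivalently \refL{Ink} with $s=q$, $r=1$). The only difference is that you use the Marcinkiewicz--Zygmund inequality plus the power-mean bound, giving the single term $B_q\,n^{q/2}\,\E\bigl[|X_k-I_k|^q\mid C_k\bigr]$, where the paper uses Rosenthal's inequality with its two terms; your coarser bound suffices here because both terms are ultimately controlled by the same geometrically summable quantities.
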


\begin{proof}
This is straightforward.  We proceed as at~\eqref{eq:W_triangle}, except that we use triangle inequality for $L^q$-norm rather than for $L^2$-norm:
  \begin{equation*}
    \norm{W_n}_q \leq \frac{1}{\sqrt{n}} \sum_{k = 1}^\infty 
    \left\| \sum_{\tau_k < i \leq n} (X_{k,i} - I_k) \right\|_q.
  \end{equation*}
  To bound the $L^q$-norm on the right, we employ Rosenthal's inequality \cite{r1970} conditionally given $C_k$ to find
\begin{align*}
\left\| \sum_{\tau_k < i \leq n} (X_{k,i} - I_k) \right\|_q^q 
&\leq c_q \left[ (n - \tau_k)^+ \| X_k - I_k \|_q^q 
  + \left( (n - \tau_k)^+ \right)^{q / 2} \| X_k - I_k \|_2^2 \right] \\
&\leq c_q \left[ n \| X_k - I_k \|_q^q + n^{q / 2} \| X_k - I_k \|_2^2 \right],
\end{align*}
and so, by \refL{l:calculus},
\[
\left\| \sum_{\tau_k < i \leq n} (X_{k,i} - I_k) \right\|_q 
\leq c_q^{1 / q} \left[ n^{1 / q} \| X_k - I_k \|_q + n^{1 / 2} \| X_k - I_k \|_2^{2 / q} \right].
\]
But by the argument at \eqref{eq:W_var_bound} we have
\[
\| X_k - I_k \|_2^2 \leq \E I_{2, k},
\]
and
\[
\| X_k - I_k \|_q \leq \|X_k\|_q + \|I_k\|_q = \left( \E I_{q, k} \right)^{1 / q} + \|I_k\|_q
\] 
by again conditioning on $C_k$ to obtain the equality here. 
Consider a generalization of the definition of 
$I_{2, k} = I_{2, k, k}$ given in \eqref{eq:I_2kl}:
\[
I_{q, k} := \E \left[ \left. X_k^q \right| C_k \right] = \int_{L_{k - 1}}^{R_{k - 1}}\!\beta^q(u, U_{\tau_k})\,du.
\]
Therefore
\[
 \left\| \sum_{\tau_k < i \leq n} (X_{k,i} - I_k) \right\|_q \leq c_q^{1 / q} \left\{ n^{1 / q} \left[ \left( \E I_{q, k} \right)^{1 / q} + \|I_k\|_q \right] + n^{1 / 2} \left( \E I_{2, k} \right)^{1 / q} \right\}.
 \]
Three applications of \refL{Ink} (requiring $\epsilon < 1 / q$, $\epsilon < 1$, and $\epsilon < 1 / 2$ to handle $\E I_{q, k}$, 
$\|I_k\|_q$, and $\E I_{2, k}$, respectively) do the rest.
\end{proof}

\begin{lemma}
\label{l:QRMWhat}
Suppose that the cost function~$\beta$ is $\epsilon$-tame with $0 \leq \epsilon < 1 / 2$.
Then the sequence $(\widehat{W}_n)$ is $L^q$-bounded for every $q < \infty$.
\end{lemma}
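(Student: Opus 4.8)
The plan is to reduce the claim, just as in the proof of \refP{p:n_tau_k_limit}, to an $L^q$-integrability statement about one non-negative random variable, and then to re-run the estimates of that proof with $L^1$-norms replaced by $L^q$-norms. Since $\widehat{W}_n \geq 0$, it is enough to bound $\|\widehat{W}_n\|_q$ uniformly in~$n$. Fix any $\delta$ with $0 < \delta < (1/2) - \epsilon$ (the interval is nonempty because $\epsilon < 1/2$) and recall from the proof of \refP{p:n_tau_k_limit} the deterministic bound $n - (n-\tau_k)^+ = \min(n, \tau_k) \leq \tau_k^{(1/2)+\delta}\, n^{(1/2)-\delta}$, valid for $0 < \delta < 1/2$. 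It gives, pointwise, $\widehat{W}_n \leq n^{-\delta} Z$ with
$$
Z := \sum_{k=1}^\infty \tau_k^{(1/2)+\delta}\, I_k,
$$
so that $\|\widehat{W}_n\|_q \leq n^{-\delta} \|Z\|_q \leq \|Z\|_q$. Thus the lemma reduces to showing $\|Z\|_q < \infty$ for every fixed $q < \infty$; since $\|\cdot\|_q$ is nondecreasing in~$q$ on a probability space, it suffices to treat $q \geq 2$.

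By Minkowski's inequality, $\|Z\|_q \leq \sum_{k \geq 1} \|\tau_k^{(1/2)+\delta} I_k\|_q = \sum_{k \geq 1} \bigl(\E[\tau_k^{\,p} I_k^q]\bigr)^{1/q}$, where $p := q\,((1/2)+\delta)$; note $p > q/2 \geq 1$. So it is enough to show that $\E[\tau_k^{\,p} I_k^q]$ decays geometrically in~$k$ up to a polynomial factor, and for this I would follow the chain of estimates in the proof of \refP{p:n_tau_k_limit} almost verbatim: bound $I_k^q \leq C_1^q\,(R_{k-1}-L_{k-1})^{q(1-\epsilon)}$ by \cite[Lemma~3.6]{fn2013}; condition on the sigma-field $\mathcal{F}_k$ introduced there, dominate $\tau_k$ stochastically by a sum of~$k$ iid geometric variables with success probability $R_{k-1}-L_{k-1}$, and apply the elementary inequality $\bigl(\sum_{i=1}^k g_i\bigr)^p \leq k^{p-1}\sum_{i=1}^k g_i^p$ (this is where $p \geq 1$ enters) to bound $\E[\tau_k^{\,p} \mid \mathcal{F}_k]$ by $k^p$ times a single geometric $p$th moment; bound that geometric moment (see below); take expectations and invoke \cite[Lemma~3.5]{fn2013}. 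The target is $\E[\tau_k^{\,p} I_k^q] \leq C\, k^p\, \rho^{\,k-1}$ with $\rho := \dfrac{2 - 2^{-q q_0}}{q q_0 + 1} \in (0,1)$, where $q_0 := (1/2) - \epsilon - \delta > 0$ (here $\rho < 1$ because $2^{-t} > 1 - t$ for $t > 0$, with $t = q q_0$). Then $\|\tau_k^{(1/2)+\delta} I_k\|_q \leq C^{1/q}\, k^{(1/2)+\delta}\, \rho^{(k-1)/q}$, which is summable in~$k$, and the lemma follows.

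\textbf{The main obstacle} is the single step that genuinely differs from \refP{p:n_tau_k_limit}: the bound on the geometric $p$th moment $\E[G_k^{\,p} \mid L_{k-1}, R_{k-1}]$, where $G_k$ is geometric with success probability $w := R_{k-1}-L_{k-1}$. In \refP{p:n_tau_k_limit} the relevant exponent, $(1/2)+\delta$, was strictly below~$1$, and the bound used there, $\E[G_k^{(1/2)+\delta}\mid\cdots] \leq C_2\,(1-w)^{-1} w^{-((1/2)+\delta)}$ (with $(1-w)^{-1}$ expanded as $\sum_{j\geq0} w^j$), sufficed. Here $p = q\,((1/2)+\delta)$ can be~$\geq 1$, and that bound is \emph{not} enough: already the $k=2$ term is then controlled only by $\sum_{j\geq0}(j + q q_0 + 1)^{-1} = \infty$, which mirrors the fact that $\E G_2^{\,p}$ itself is infinite once $p \geq 1$; and pulling the $q$th root through the $j$-sum by \refL{l:calculus} does not rescue it, because the resulting double series $\sum_j \sum_k k^{(1/2)+\delta}\gamma_j^{(k-1)/q}$ has inner sum tending to~$1$ (not~$0$) as $j\to\infty$. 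The fix is to establish the sharper estimate
$$
\E[G_k^{\,p} \mid L_{k-1}, R_{k-1}] \leq C_p\,(R_{k-1}-L_{k-1})^{-p},
$$
\emph{without} the spurious $(1-w)^{-1}$: split on the size of~$w$; for $w \leq 1/2$ use $\Prob(G_k > t \mid w) = (1-w)^{\lfloor t \rfloor} \leq (1-w)^{-1} e^{-t} \leq 2 e^{-t}$ in $\E[G_k^{\,p}\mid w] = \int_0^\infty p\,t^{p-1}\Prob(G_k > t \mid w)\,dt$, and for $w > 1/2$ bound the moment by an absolute constant and use $w^{-p} \geq 1$. (Alternatively one can extract this sharper bound from the polylogarithm estimate \cite[Theorem~1]{f1999} used in \refP{p:n_tau_k_limit}, after factoring one power of~$z$ out of $\Li_{-p,0}(z)$ so that the geometric's prefactor $z^{-1}$ cancels rather than turning into $(1-w)^{-1}$.) With this sharper bound the exponent of $R_{k-1}-L_{k-1}$ surviving after \cite[Lemma~3.6]{fn2013} is exactly $q(1-\epsilon) - p = q q_0 > 0$, so \cite[Lemma~3.5]{fn2013} applies directly and the geometric decay above holds uniformly for all $k \geq 1$; in particular the $k=1$ and $k=2$ terms need no separate treatment.
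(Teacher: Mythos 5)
Your argument is correct, and it follows the paper's skeleton (Minkowski's inequality, the tameness bound $I_k \leq C_1 (R_{k-1}-L_{k-1})^{1-\epsilon}$ from \cite[Lemma~3.6]{fn2013}, conditioning on $\mathcal{F}_k$ and dominating $\tau_k$ stochastically by a sum of~$k$ iid geometrics, the power-mean inequality, and \cite[Lemma~3.5]{fn2013}) but departs from it at exactly the step you single out, and your variant is genuinely different there. The paper takes $\delta=0$ and keeps the polylogarithm bound $\E[G_k^{q/2}\mid w]\leq C_2\sum_{j\geq 0}w^{j-(q/2)}$ (with $w:=R_{k-1}-L_{k-1}$), i.e.\ it retains the factor $(1-w)^{-1}$ expanded as a $j$-sum; it then applies \cite[Lemma~3.5]{fn2013} termwise in~$j$, takes $q$th roots via \refL{l:calculus}, and---precisely because of the divergence you diagnose (the inner sum over~$k$ of $k\,\gamma_{j,q,\epsilon}^{(k-1)/q}$ tends to~$1$, not~$0$, as $j\to\infty$)---sums only over $k\geq K$ with $K\geq 2q+1$, proving finiteness of the remaining terms separately via \eqref{eq:tau_I_bound_q}, with an exact Beta-function computation for $k=2$. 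Your sharper geometric-moment estimate $\E[G_k^{p}\mid w]\leq C_p\,w^{-p}$ removes the $(1-w)^{-1}$ factor, hence the $j$-sum, hence both the tail-splitting in~$k$ and the special small-$k$ treatment: the surviving exponent $q(1-\epsilon)-p=qq_0>0$ feeds directly into \cite[Lemma~3.5]{fn2013} and yields $\E[\tau_k^{p}I_k^{q}]\leq C\,k^{p}\rho^{k-1}$ with a single $\rho<1$ uniformly in $k\geq 1$, after which summability of $\|\tau_k^{(1/2)+\delta}I_k\|_q$ is immediate. (Keeping $\delta>0$ rather than the paper's choice $\delta=0$ is immaterial; both leave a positive exponent on $w$.) What your route buys is a shorter, uniform argument; what the paper's route buys is that it recycles verbatim the machinery already set up for \refP{p:n_tau_k_limit}, at the cost of the $K\geq 2q+1$ split and the $k=1,2$ endgame.

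One small correction to the step you supply: for $w\leq 1/2$ the chain $\Prob(G_k>t\mid w)=(1-w)^{\lfloor t\rfloor}\leq (1-w)^{-1}e^{-t}\leq 2e^{-t}$ is false as written (it would make $\E[G_k^p\mid w]$ bounded by an absolute constant, which fails as $w\downarrow 0$). You want $(1-w)^{\lfloor t\rfloor}\leq (1-w)^{t-1}\leq 2(1-w)^{t}\leq 2e^{-wt}$, using $1-w\leq e^{-w}$; integrating against $p\,t^{p-1}$ then gives $\E[G_k^p\mid w]\leq 2\,\Gamma(p+1)\,w^{-p}$, which is exactly the bound your argument needs. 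With that fix the proof is complete.
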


\begin{proof}
We may and do suppose $q \geq 2$.
We begin as in the proof of \refP{p:n_tau_k_limit}, except that there is now no harm in choosing 
$\delta = 0$.  So it is sufficient to prove that
\[
\sum_{k = 1}^{\infty} \left\| \tau_k^{1 / 2} I_k \right\|_q < \infty.
\]
We follow the proof of \refP{p:n_tau_k_limit} to a large extent; in particular, what we will show is that all the terms in this sum are finite and that, for sufficiently large~$K$, the series $\sum_{k = K}^{\infty}$ converges.  As in the proof of \refP{p:n_tau_k_limit}, we utilize the bound
  \begin{equation*}
    I_k \leq \frac{2^\epsilon c}{1-\epsilon}(R_{k-1} - L_{k-1})^{1-\epsilon},
  \end{equation*}
which requires only $\epsilon$-tameness with $\epsilon < 1$.  Then we proceed much the same way as at~\eqref{eq:conditional_tau_I}, but now substituting convexity of $q$th power for use of \refL{l:calculus}:
  \begin{align}
    \E\left[\left. \left( \tau_k^{1/2}I_k \right)^q \right| \mathcal{F}_k\right] 
    & \leq C_1^q\,\E \left[\left.\left(\sum_{i=0}^{k-1}G_{k,i}\right)^{q/2}(R_{k-1}-L_{k-1})^{q(1-\epsilon)}\right|L_{k-1},R_{k-1}\right] \nonumber \\
    &\leq C_1^q\,(R_{k-1}-L_{k-1})^{q(1-\epsilon)}k^{(q/2)-1}\,\E \left[ \left. \sum_{i=0}^{k-1} G_{k,i}^{q/2}\right|L_{k-1},R_{k-1}\right] \nonumber \\
    &\leq C_1^q k^{q/2} (R_{k-1}-L_{k-1})^{q(1-\epsilon)}\,\E\Big[ G_k^{q/2}\Big|L_{k-1},R_{k-1}\Big], \label{eq:conditional_tau_I_q}
 \end{align}
where, as before, $C_1 = 2^{\epsilon} c / (1 - \epsilon)$.

Arguing from here just as in the proof of \refP{p:n_tau_k_limit}, we find
\[
\E\left[ \left. G_k^{q/2} \right| L_{k-1}, R_{k-1}\right] \leq C_2\sum_{j=0}^\infty (R_{k-1} -L_{k-1})^{j-(q/2)}
\]
where $C_2 := \max(\Gamma(1+(q/2)), C_{q/2,\eta})$.  (See the proof of \refP{p:n_tau_k_limit} for the definition of $C_{q/2, \eta}$.)
Therefore, with $C_3 := C_1^{q/2} C_2$, we have
\[
\E\left[\left. \left( \tau_k^{1/2}I_k \right)^q \right| \mathcal{F}_k\right] 
\leq C_3\,k^{q / 2} \sum_{j = 0}^{\infty} (R_{k - 1} - L_{k - 1})^{j + q(1 - \epsilon) - (q/2)}.
\]
By \cite[Lemma~3.1]{fn2013}, we have (using our assumption $\epsilon < 1/2$ for the $j = 0$ term)
\begin{equation}
\label{eq:tau_I_bound_q}
\E\left( \tau_k^{1/2}I_k \right)^q
\leq C_3\,k^{q / 2} \sum_{j = 0}^{\infty} \gamma_{j, q, \epsilon}^{k - 1},
\end{equation}
where
\[
\gamma_{j, q, \epsilon} := \frac{2 - 2^{- [j + q (1 - \epsilon) - (q / 2)]}}{j + q (1 - \epsilon) - (q/ 2) + 1} 
\in (0, 1)
\]
decreases in~$j$ and vanishes in the limit as $j \to \infty$.
Therefore, taking $q$th roots and using \refL{l:calculus},
\[
\left\| \tau_k^{1 / 2} I_k \right\|_q \leq C_3^{q / 2} k^{1/2} 
\sum_{j = 0}^{\infty} \gamma_{j, q, \epsilon}^{(k - 1) / q}. 
\]
If we bound the factor $k^{1/2}$ here by $k$ and then sum the right side over $k \geq K$, the result is
\[
C_3^{q / 2}
\sum_{j = 0}^{\infty} 
\left[ (K - 1) \frac{\Gamma_j^{K - 1}}{1 - \Gamma_j} + \frac{\Gamma_j^{K - 1}}{(1 - \Gamma_j)^2} \right]
\leq
C_3^{q / 2}
K \sum_{j = 0}^{\infty} \frac{\Gamma_j^{K - 1}}{(1 - \Gamma_j)^2},
\]
where
\[
\Gamma_j \equiv \Gamma_{j, q, \epsilon} := \gamma_{j, q, \epsilon}^{1 / q} \in (0, 1),
\]
like $\gamma_{j, q, \epsilon}$, decreases in~$j$ and vanishes in the limit as $j \to \infty$.  Since
$\Gamma_j < (2 / j)^{1 / q}$, it follows if we take $K \geq 2 q + 1$ that
\[
\sum_{k = K}^{\infty} \left\| \tau_k^{1 / 2} I_k \right\|_q < \infty.
\]

It remains to show that $\left\| \tau_k^{1 / 2} I_k \right\|_q < \infty$ for every~$k$.  For this we 
use~\eqref{eq:tau_I_bound_q} to note, since $0 < \gamma_{j, q, \epsilon} < 2 / j$, that it clearly suffices to consider the cases $k = 1$ and $k = 2$.  When $k = 1$ we have $\tau_1$ = 1 and hence
$\left\| \tau_1^{1 / 2} I_1 \right\|_q = \left\| I_1 \right\|_q \leq C_1 < \infty$.  
Applying~\eqref{eq:conditional_tau_I_q} when $k = 2$ gives
\[
    \E\left[\left. \left( \tau_2^{1/2}I_2 \right)^q \right| \mathcal{F}_2\right] 
    \leq C_1^q 2^{q/2} (R_1 - L_1)^{q(1-\epsilon)}\,\E\Big[ G_2^{q/2}\Big|L_1, R_1 \Big], 
\]
and we can exactly compute
\begin{align*}
\lefteqn{\E \left\{  (R_1 - L_1)^{q(1-\epsilon)}\,\E\Big[ G_2^{q/2}\Big|L_1, R_1 \Big] \right\}} \\
&= \E \left\{  (R_1 - L_1)^{q(1-\epsilon)}\,
\sum_{i = 1}^{\infty} i^{q / 2} (R_1 - L_1) [1 - (R_1 - L_1)]^{i - 1} \right\} \\ 
&= \sum_{i = 1}^{\infty} i^{q / 2} \E \left[ U^{i - 1} (1 - U)^{q(1-\epsilon) + 1} \right]
= \sum_{i = 1}^{\infty} i^{q / 2} B(i, q(1 - \epsilon) + 2)
\end{align*}
where $U \sim \mbox{unif}(0, 1)$.  Each of the terms in this last sum is finite, and by Stirling's formula the $i$th term equals $(1 + o(1)) i^{-[2 + ((1/2) - \epsilon) q]} = o(i^{-2})$ as $i \to \infty$, so the sum converges.  Hence $\| \tau_2^{1/2} I_2 \|_q < \infty$.
\end{proof}

\begin{Remark}
\emph{
\cite[Chapter~7]{m2015} describes the approach, involving the contraction method for
inspiration and the method of moments for proof, we initially took in trying to establish
a limiting distribution for the \quickval\ residual in the special case of \quickmin\ with key-comparisons cost. 
It turns out that, for this approach, we must consider the
\quickmin\ limit and the residual from it bivariately.  However, we discovered
that, unfortunately, the limit residual \quickmin\ distribution
is not uniquely determined by its moments (we omit the proof here); so the method-of-moments approach
is ultimately unsuccessful, unlike for \quicksort~\cite{f2015}.  We nevertheless find that approach
instructive, since 
it does yield a rather direct proof of
convergence of moments for the residual in the special case of \quickmin with key-comparisons
cost; see \cite[Chapter~7]{m2015} for details.
}
\end{Remark}

\noindent
{\bf Acknowledgements.\ }The authors thank two anonymous referees for helpful comments.  In particular, one of the referees provided an argument enabling us to simplify and shorten the proof of \refL{L:EW_2^2} substantially.
\medskip

\noindent
{\bf Competing interests:\ }The authors declare none.
\medskip

\noindent
{\bf Data availability:\ }Data availability is not applicable to this article since no data were generated or analyzed.

\bibliographystyle{plain}
\bibliography{references}

\end{document}